\theoremstyle{plain}
\newtheorem{thm}{Theorem}[section]
\newtheorem{lemma}[thm]{Lemma}
\newtheorem{cor}[thm]{Corollary}
\newtheorem{condition}[thm]{Condition}
\newtheorem{construction}[thm]{Construction}
\newtheorem{example}[thm]{Example}
\theoremstyle{remark}
\newtheorem{rem}[thm]{Remark}
\begin{document}
\date{\today}
\subjclass[2010]{Primary 11R29, 11Y16; Secondary 11R45}

\title [Explicit Approximations to Class Field Towers]{ Explicit Approximations to Class Field Towers}

\author[Bleher]{F. M. Bleher}
\address{F. M. Bleher, Dept. of Mathematics\\Univ. of Iowa \\ Iowa City, IA 52246, USA}
\email{frauke-bleher@uiowa.edu}
\thanks{The first author was partially supported by NSF grant DMS 1801328, Simons Foundation grant 960170 and
NSF FRG grant DMS-2411703.}

\author[Chinburg]{T. Chinburg}
\address{T. Chinburg, Dept. of Mathematics\\ Univ. of Pennsylvania \\ Philadelphia, PA 19104, USA}
\email{ted@math.upenn.edu}
\thanks{The second author is the corresponding author; he was partially supported by  NSF SaTC grant No. 1701785,
Simons Foundation grant No. MP-TSM-00002279 and  NSF FRG grant DMS-2411702.}

\maketitle

\begin{abstract} We answer a question of Peikert and Rosen by giving for each $\epsilon > 0$ an efficient  construction of infinite families of number fields $N$ such that  the root discriminant $D_N^{1/[N:\mathbb{Q}]}$ is bounded above by a constant  times $[N:\mathbb{Q}]^\epsilon$.  
\end{abstract}

\section{Introduction}
\label{s:intro}

In this paper we will say that a family of number fields can be efficiently constructed if there is an algorithm and a polynomial $f(x) \in \mathbb{R}[x]$ such that for each field $F$ in the family, the algorithm requires time bounded by $f(\log [F:\mathbb{Q}])$ for producing a set of polynomials whose roots generate $F$.   Our main result is:

\begin{thm}
\label{thm:overtheorem}
For each $\epsilon > 0$, there is an efficient construction of an infinite family of number fields $E$ of increasing degree $n$ such that $D_{E}^{1/n} = O(n^{\epsilon}).$ 
 \end{thm}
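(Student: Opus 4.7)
The plan is to realize each $E$ as a tower of tame elementary abelian extensions that serves as an efficiently computable surrogate for a class field tower, with Kummer generators supplied explicitly in place of an existential argument.

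I would fix a base field $F_0$ of small root discriminant, a prime $\ell$, and a small finite set $S$ of primes of $F_0$ containing those above $\ell$. I would then construct a tower $F_0 \subset F_1 \subset F_2 \subset \cdots$ as follows: at each stage, let $S_i$ denote the primes of $F_i$ above $S$, and form $F_{i+1}$ by adjoining $\ell$-th roots of an explicit family $\alpha_{i,1}, \ldots, \alpha_{i,k_i}$ of $S_i$-units of $F_i$ chosen so as to be $\mathbb{F}_\ell$-linearly independent in $F_i^\times/(F_i^\times)^\ell$. Kummer theory then forces $[F_{i+1}:F_i] = \ell^{k_i}$ and guarantees that $F_{i+1}/F_i$ is ramified only above $S$. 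The ramification at primes of $S_i$ not lying over $\ell$ is tame, and the wild ramification at primes above $\ell$ is controlled by the fact that the residue characteristic has been fixed.

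To bound the root discriminant, apply the conductor--discriminant formula to the abelian extension $F_{i+1}/F_i$: each of the $\ell^{k_i}$ characters of $\operatorname{Gal}(F_{i+1}/F_i)$ has conductor supported on $S_i$ and bounded uniformly in $i$ by a quantity depending only on $\ell$ and $S$. This yields a bound of the form $D_{F_{i+1}}^{1/[F_{i+1}:\mathbb{Q}]} \leq D_{F_i}^{1/[F_i:\mathbb{Q}]}\cdot B_{\ell,S}$ for a constant $B_{\ell,S}$ independent of $i$. Hence after $t$ steps the root discriminant is at most $D_{F_0}^{1/[F_0:\mathbb{Q}]}\cdot B_{\ell,S}^{t}$, while $[F_t:\mathbb{Q}] \geq \ell^{k_1+\cdots+k_t}$. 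Selecting $\ell$ and the $k_i$ in terms of $\epsilon$ then balances these to give $D_E^{1/n} = O(n^{\epsilon})$. Each step is defined by $k_i$ degree-$\ell$ Kummer polynomials over $F_i$, so the total output is polynomial-sized and can be produced in time polynomial in $\log[E:\mathbb{Q}]$.

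The principal obstacle is supplying, at each level, an $\mathbb{F}_\ell$-linearly independent family of $S_i$-units in $F_i$ without computing the full $S_i$-unit group, which becomes infeasible as $[F_i:\mathbb{Q}]$ grows exponentially. This is exactly the role of the ``explicit approximation to a class field tower'' idea: the $\alpha_{i,j}$ must come from a direct formula---for instance from cyclotomic or elliptic units, Kummer generators inherited from the previous level, or other $S$-unit constructions tailored to $F_0$---with independence verifiable a priori rather than by unit-group computation. A secondary technical issue is that $E$ has degree exponential in the number of steps, so it cannot be given by a single minimal polynomial over $\mathbb{Q}$ in polynomial time; the output must be a chain of defining polynomials over the successive $F_i$, and one must verify that this chain format satisfies the paper's notion of efficient construction.
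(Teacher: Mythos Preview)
Your outline has two gaps that prevent it from being a proof, and you have essentially flagged both yourself without resolving either.

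First, the assertion that each character of $\mathrm{Gal}(F_{i+1}/F_i)$ has conductor ``bounded uniformly in $i$ by a quantity depending only on $\ell$ and $S$'' is not justified, and is false in general at the primes above $\ell$. For a Kummer $\ell$-extension of a local field $K_{\mathfrak p}$ with $\mathfrak p\mid\ell$, the conductor exponent is governed by $e(\mathfrak p/\ell)$, which grows along the tower whenever $\ell$ ramifies at successive steps. The cyclotomic $2$-tower $\mathbb{Q}\subset\mathbb{Q}(\zeta_4)\subset\mathbb{Q}(\zeta_8)\subset\cdots$ already exhibits this: each step adjoins a square root of a unit (an $S$-unit for $S=\{2\}$), yet the root discriminant of $\mathbb{Q}(\zeta_{2^n})$ is $2^{n-1}$, unbounded. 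So the inequality $D_{F_{i+1}}^{1/[F_{i+1}:\mathbb{Q}]}\le D_{F_i}^{1/[F_i:\mathbb{Q}]}\cdot B_{\ell,S}$ with $B_{\ell,S}$ independent of $i$ does not follow from your hypotheses. Second, the ``principal obstacle'' you name---producing, at each level, $k_i$ independent $S_i$-units by an explicit formula rather than by computing unit groups in fields of exponentially growing degree---is exactly the content of the theorem, and your suggestions (cyclotomic or elliptic units, inherited generators) are not developed into anything one could verify. Without this there is no construction at all, efficient or otherwise.

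The paper's argument is structurally quite different and avoids both problems. It is not an iterated tower but a two-step construction: $F_1=\mathbb{Q}(\sqrt{p_1},\ldots,\sqrt{p_t})$ for the primes $p_i\equiv 1\pmod 8$ up to $X$, and then $N/F_1$ is built to be \emph{unramified at every finite place}, so the root discriminant does not increase at all in the second step and one gets $\log D_N^{1/[N:\mathbb{Q}]}\le\tfrac12 X\log X$. The Kummer generators $\sqrt{w_{ij}}$ for $N/F_1$ are square roots of explicit units in the \emph{biquadratic} fields $\mathbb{Q}(\sqrt{p_i},\sqrt{p_j})$, manufactured from fundamental units of real quadratic fields; every computation therefore lives in a field of degree at most $4$, which is what makes the construction efficient. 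The independence of enough of the $\sqrt{w_{ij}}$ (giving $\log[N:\mathbb{Q}]\gg X^2/(\log X)^2$) is established by showing each $\mathbb{Q}(\sqrt{p_i},\sqrt{p_j},\sqrt{w_{ij}})/\mathbb{Q}$ is dihedral of order $8$, together with an effective Chebotarev count (Kowalski--Michel zero-density plus the conditional Chebotarev theorem of Pierce--Turnage-Butterbaugh--Wood) showing that a positive proportion of ordered pairs $(p_i,p_j)$ are ``useful.'' None of these ingredients---confining all unit computations to quadratic and biquadratic fields, forcing the large extension to be finite-unramified, and the analytic input that replaces an existential supply of units---appears in your sketch.
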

 
By a result of Golod and Shafarevitch  \cite{Roquette},  there are infinite class field towers, and the fields in such a tower have constant root discriminant. However, we do not know of an efficient construction of the fields in a class field tower.  

Theorem \ref{thm:overtheorem} answers affirmatively a question of Peikert and Rosen  \cite[\S 1.2]{PeikertRosen}.  They asked whether there is an efficient construction of a family of number fields $E$ of increasing degree $n$ for which  $D_{E}^{1/n} = o(n \log \log (n)/\log(n)).$
 
 We will call the families of  $E$ we construct in Theorem \ref{thm:overtheorem} explicit approximations to class field towers, since they arise as large degree subfields of the second level in the two-elementary class field towers of a family of quadratic fields.  We now describe one instance of the construction.

 \begin{construction}
 \label{con:doit}
 \hspace*{1cm}
\begin{enumerate}
\item[1.]  For a large $X > 0$ let $p_1,\ldots,p_t$ be the increasing sequence of all rational primes $p_i$ such that $p_i \equiv 1$ mod $8$ and $p_i \le X$.
\item[2.]  For each ordered pair $(p_i,p_j)$ with $i \ne j$ calculate the quadratic residue symbol $\left ( {p_i} \atop {p_j}\right ) $.   We continue with this pair if and only if  $\left ( {p_i} \atop {p_j}\right ) = 1$.
\item[3.]  Using continued fractions, calculate fundamental units $e_i$, $e_j$ and $e_{ij}$ of the real quadratic fields $\mathbb{Q}(\sqrt{p_i})$, $\mathbb{Q}(\sqrt{p_j})$ and $\mathbb{Q}(\sqrt{p_i \cdot p_j})$. By multiplying $e_{ij}$ by $-1$ if necessary we can assume that at least one conjugate of $e_{ij}$ is positive. Write $e_i = a_i + b_i \frac{1+ \sqrt{p_i}}{2}$ for some explicit $a_i, b_i \in \mathbb{Z}$.
\item[4.]  Since $\left ( {p_i} \atop {p_j}\right ) = 1$, we can find an element $\beta_i \in (\mathbb{Z}/p_j)^*$ such that $\beta_i^2 \equiv p_i$ in $\mathbb{Z}/p_j$.  Using $e_i = a_i + b_i \frac{1+ \sqrt{p_i}}{2}$ from step 3, calculate  $e'_i = a_i + b_i \frac{1 + \beta_i}{2} \in \mathbb{Z}/p_j$.  If the quadratic symbol $\left ({e'_i} \atop {p_j} \right )$ equals $1$, discard the pair 
$(p_i,p_j)$.  Otherwise define $(p_i,p_j)$ to be in the set $S$ of ``useful ordered pairs" of primes.  
 \item[5.]   Suppose $(p_i,p_j) \in S$. If both conjugates of $e_{ij}$ are positive then $u = \sqrt{e_{ij}}$ lies in the ring of integers $O_L$ of the field $L = \mathbb{Q}(\sqrt{p_i},\sqrt{p_j})$.  Otherwise, there will be a unique choice of $s \in \{\pm 1\}$ such that 
 $u = \sqrt{ s e_i e_j e_{ij}}$ lies in $O_L$.  There will be a unique quadruple $(a, b, c, d) \in \{0,1\}^4$ such that 
$w_{ij} = (-1)^a e_i^b  e_j^c u^d \ne 1$ and  $\psi(w_{ij})$ 
 lies in $\{1,5\}$ mod $8$ for all four of the surjective algebra homomorphisms $\psi:O_L \to \mathbb{Z}/8$ that result from the fact that $2$ splits in $L$. \end{enumerate}
\end{construction}

We now state the following precise form of Theorem \ref{thm:overtheorem}.

\begin{thm}
\label{thm:main}  Let $N$ be the Galois CM extension of $\mathbb{Q}$ generated by all of the numbers $\sqrt{p_i}$, $\sqrt{p_j}$ and $\sqrt{w_{ij}}$ for all  $(p_i,p_j) \in S$, where $S$ is the set of useful prime pairs defined in step 4 of Construction \ref{con:doit}.   One has  
\begin{equation} 
\label{eq:bigineq}
\log{[N:\mathbb{Q}]} \ge c \frac{X^2}{(\log(X))^2} \quad \mathrm{and} 
\quad \log{ (D_{N}^{1/[N:\mathbb{Q}]})} \le \frac{1}{2} X \log(X)
\end{equation}
for some effective constant $c > 0$.  In consequence, 
$$D_{N}^{1/[N:\mathbb{Q}]} = O([N:\mathbb{Q}]^\epsilon)$$
for all $\epsilon > 0$, where the constant is effective and depends only on $\epsilon$.  The family of fields $N$ that can be constructed in this way as $X$ varies can be constructed efficiently.
\end{thm}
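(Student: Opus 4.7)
The plan is to factor the theorem into three parts: (i) show that $N/K_0$ is unramified at every finite prime, where $K_0 = \mathbb{Q}(\sqrt{p_1},\ldots,\sqrt{p_t})$; (ii) deduce the discriminant bound in \eqref{eq:bigineq} from the conductor-discriminant formula for $K_0$; and (iii) show that the $\sqrt{w_{ij}}$ over $(p_i,p_j)\in S$ yield $\mathbb{F}_2$-linearly independent quadratic extensions of $K_0$, so that $[N:K_0]\ge 2^{|S|}$ with $|S|\gg X^2/(\log X)^2$.

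For (i) and (ii), I first check that each $w_{ij}$ is a global unit of $O_L$ where $L=\mathbb{Q}(\sqrt{p_i},\sqrt{p_j})$. The only nontrivial point is that $u=\sqrt{e_{ij}}$ (resp.\ $\sqrt{\pm e_i e_j e_{ij}}$) lies in $O_L$; this is a consequence of Kuroda's theorem on the unit index of real biquadratic fields together with the condition $\left(\tfrac{p_i}{p_j}\right)=1$, which forces $[O_L^*:\langle -1,e_i,e_j,e_{ij}\rangle]$ to be $2$ or $4$ and pins down the correct sign. With $w_{ij}$ a unit, $L(\sqrt{w_{ij}})/L$ is automatically unramified at every odd prime. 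At $p=2$, the hypothesis $p_i,p_j\equiv 1\bmod 8$ makes $2$ split completely in $L$, and each of the four completions is $\mathbb{Q}_2$; the condition $\psi(w_{ij})\in\{1,5\}\bmod 8$ in step $5$ of Construction \ref{con:doit} forces $w_{ij}$ locally to be either a $\mathbb{Q}_2$-square or a representative of the unique unramified quadratic extension $\mathbb{Q}_2(\sqrt 5)$. Hence $N/K_0$ is unramified at every finite prime. Since $p_i\equiv 1\bmod 4$ gives $\mathrm{disc}\,\mathbb{Q}(\sqrt{p_i})=p_i$, the conductor-discriminant formula yields $|d_{K_0}|=\prod_{i=1}^t p_i^{2^{t-1}}$, and multiplicativity of discriminants across $N/K_0/\mathbb{Q}$ gives $\log D_N^{1/[N:\mathbb{Q}]}=\log D_{K_0}^{1/2^t}=\tfrac12\sum_i\log p_i\le\tfrac12\theta(X)\le\tfrac12 X\log X$, which is the second bound of \eqref{eq:bigineq}.

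For (iii), the prime number theorem in arithmetic progressions yields $t\gtrsim X/\log X$; quadratic reciprocity together with equidistribution gives a positive density of pairs passing step $2$; and Chebotarev, applied to the extension of $\mathbb{Q}$ cut out by the quadratic character $\left(\tfrac{e_i'}{\cdot}\right)$, gives a positive density of $p_j$ passing step $4$. These combine to $|S|\gg X^2/(\log X)^2$. The main obstacle is then to prove the $\mathbb{F}_2$-independence of $\{w_{ij}\}_{(i,j)\in S}$ in $O_{K_0}^*/(O_{K_0}^*)^2$. The strategy is local: reducing $w_{ij}$ modulo a prime $\mathfrak{P}$ of $K_0$ above $p_j$, the requirement $\left(\tfrac{e_i'}{p_j}\right)=-1$ is precisely the condition that the image of $w_{ij}$ in $\mathbb{F}_{p_j}^*$ is a non-square, while for $(i',j')\in S$ with $j'\ne j$ the element $w_{i'j'}$ involves only units whose residues at $\mathfrak{P}$ can, with judicious choice of $\mathfrak{P}$, be arranged to be squares. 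Assembling these local squareness/nonsquareness invariants into a detection matrix indexed by $S$ and by primes above each $p_j$, one obtains full rank over $\mathbb{F}_2$, giving $\log[N:\mathbb{Q}]\ge|S|\log 2\gg X^2/(\log X)^2$. Combining with the discriminant bound yields $\log D_N^{1/[N:\mathbb{Q}]}=o(\log[N:\mathbb{Q}])$, and hence $D_N^{1/[N:\mathbb{Q}]}=O([N:\mathbb{Q}]^\epsilon)$ for every $\epsilon>0$. Efficiency is routine: the prime sieve, Legendre-symbol evaluations, residue checks, and continued-fraction computations of the fundamental units all run in time polynomial in $X$, which is a polynomial in $\log[N:\mathbb{Q}]$.
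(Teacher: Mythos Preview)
Your unramifiedness argument for $N/K_0$ and the resulting root-discriminant bound are correct and match the paper. Two of your steps, however, contain genuine gaps.

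\textbf{The count $|S|\gg X^2/(\log X)^2$.} This is not routine; it is the main analytic content of the proof. For each $p_i\le X$ the splitting condition on $p_j$ is a Frobenius condition in a degree-$16$ extension $Y_i=\mathbb{Q}(\zeta_8,\sqrt{p_i},\sqrt{e_i})$ whose discriminant is of size $p_i^{O(1)}$. To obtain \eqref{eq:bigineq} you must count $p_j\le X$ with prescribed Frobenius in $Y_i$ \emph{uniformly} over all $p_i\le X$, and unconditional effective Chebotarev (Lagarias--Odlyzko) is far too weak here: its error term is $o(\mathrm{Li}(X))$ only once $\log X\gg (\log d_{Y_i})^2$, i.e.\ $p_i\le\exp(c\sqrt{\log X})$, which covers almost none of the $p_i$. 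The paper handles this by combining a conditional Chebotarev theorem of Pierce--Turnage-Butterbaugh--Wood (valid under an explicit zero-free region for $\zeta_{Y_i}/\zeta_{\mathbb{Q}}$) with a zero-density estimate of Kowalski--Michel showing that all but $O(X^\tau)$ of the $Y_i$ with $p_i\le X$ possess such a region; the point is that the non-abelian $L$-functions involved are attached to odd dihedral, hence weight-$1$ holomorphic, forms, so Ramanujan--Petersson and the required convexity bounds are available. A one-line appeal to ``Chebotarev'' supplies none of this.

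\textbf{Independence of the $w_{ij}$.} First, the paper shows $(p_i,p_j)\in S\Leftrightarrow(p_j,p_i)\in S$ and $w_{ij}=w_{ji}$ (Lemma~\ref{lem:algpart1}), so at best you get $|S|/2$ independent classes, not $|S|$. More seriously, your local detection scheme does not work as written. The element $w_{ij}=(-1)^a e_i^{\,b} e_j^{\,c} u^{d}$ involves $e_j$ and $u$ (which in turn involves $e_{ij}$), and the residues of these at a prime above $p_j$ are not governed by the condition $\bigl(\tfrac{e_i'}{p_j}\bigr)=-1$; so that condition is \emph{not} ``precisely'' the non-squareness of $w_{ij}$ modulo $\mathfrak{P}$. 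Likewise, the assertion that $w_{i'j'}$ for $j'\ne j$ can be made a local square by ``judicious choice'' of $\mathfrak{P}$ imposes simultaneous constraints on the residues of many unrelated units and is not justified. The paper avoids Kummer theory over $K_0$ entirely: it maps the free pro-$2$ group on generators $x_1,\dots,x_\ell$ onto $\mathrm{Gal}(N/\mathbb{Q})$ via inertia generators and uses the fact that each $L(\sqrt{w_{ij}})/\mathbb{Q}$ is \emph{dihedral of order $8$} to show that no relation can involve the commutator $[x_i,x_j]$ for $(p_i,p_j)\in S$. This yields $\mathrm{ord}_2[N:\mathbb{Q}]\ge |S|/2+\ell$ directly, and it is the non-abelianness over $\mathbb{Q}$---ultimately forced by the step-$4$ condition through genus theory---that does the work, not a local quadratic-residue computation.
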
 

One consequence of the above theorem is that fields of small root discriminant can be produced  by adjoining to $\mathbb{Q}$ fourth roots of products of $\pm 1$ and fundamental units of real quadratic fields whose discriminants involve only one or two primes.  There is a significant advantage to being able to construct $N$ by adjoining to $L$ the square roots of units.  For example, we will show elsewhere that this implies the ring of integers $O_N$ of $N$ is a free module over the ring of integers of $L$, with a basis that can be described explicitly using the $\sqrt{w_{i,j}}$ appearing in Theorem \ref{thm:main}. This is relevant to the use of $O_N$ in cryptography.

To make the proof unconditional we rely on a strong conditional Chebotarev density theorem of Pierce, Turnage-Butterbaugh and Wood in \cite{PTW}. We use work of Kowalski and Michel in \cite{KM}  to show that for most of the  fields we must consider, the hypotheses of this conditional theorem are met.  It is fortunate that the zeta functions that arise are products of $L$-functions associated to either Dirichlet characters or odd two-dimensional Galois representations.  By work of Deligne and Serre, the Ramanujan-Petersson conjecture is known for these $L$-functions, and one has strong convexity bounds for them, due to their connection to automorphic forms.  

Higher dimensional automorphic representations have also been used to construct particular number fields of high degree and small root discriminant. A particular example is due to Demb\'el\'e \cite{Dem} and Serre \cite{Serre}. They use a Hilbert modular form of level $1$ and parallel weight $2$ over the maximal totally real subfield of the cyclotomic field $\mathbb{Q}(\zeta_{32})$ to construct a non-solvable Galois extension $K$ of $\mathbb{Q}$ that is unramified outside of $2$, having degree $2^{19}\cdot 3^2 \cdot  5^2 \cdot 17^{2} \cdot 257^2$ and root discriminant $\le 55.394388\ldots$.
However, their construction does not provide an explicit basis for $K$ over $\mathbb{Q}$.

There are many variations on Construction \ref{con:doit} that lead to sharper results with more complicated statements. For example, a variation on this construction produces a field $N'$ with 
\begin{equation}
\label{eq:Nprimediscriminant}
[N': \mathbb{Q}] = 256  \quad \mathrm{and}\quad   
D_{N'}^{1/[N':\mathbb{Q}]} = 2^{7/4}\cdot \sqrt{3 \cdot 7\cdot 13} \approx 55.5756...
\end{equation} 
By comparison, when $\zeta$ is a root of unity of order $512 = 2^9$ one has
$$D_{\mathbb{Q}(\zeta)}^{1/[\mathbb{Q}(\zeta):\mathbb{Q}]} = [\mathbb{Q}(\zeta): \mathbb{Q}] = 256.$$
Thus from the point of view of root discriminants, the construction of $N'$  improves on the two-power cyclotomic case by more than a factor of $4$.
Theorem \ref{thm:main} says this improvement only becomes greater in higher degrees.

To describe $N'$ explicitly, 
define
 \begin{equation}
 \label{eq:nudef}
 \nu_{i,j} = -13 +(-1)^i 5 \sqrt{7} - (-1)^j 8 \sqrt{13} + (-1)^{i+j} 3 \sqrt{7} \cdot \sqrt{13}
 \end{equation}
 for $i, j \in \{1,2\}$.   Then $N'$ is obtained by adjoining to $\mathbb{Q}$ all square roots of elements of  
 \begin{equation}
 \label{eq:Cdef}
 C = \left\{7,13,-1, \frac{3 + \sqrt{13}}{2}\right\} \cup \{\nu_{i,j}\}_{1 \le i, j \le 2}.
 \end{equation}
 Note that we have used the primes $7$ and $13$ that are not congruent to $1$ mod $8$, and $N'$ will not be a CM field.  
  In fact, $N'$ is the ray class field of $\mathbb{Q}(\sqrt{7},\sqrt{13})$ of conductor $6$ times the product of the infinite places of this field.

We now give an outline of this paper.

In \S \ref{s:algebra} we prove various algebraic and group theoretic results that reduce the proof of Theorem \ref{thm:main} to the problem of showing $\# S$ is at least as large as a positive constant times $(X/\log(X))^2$ for all sufficiently large $X$.  This statement is reduced to showing that for a particular family of degree $16$ fields parameterized by the primes $p_i \equiv 1$ mod $8$  with $p_i \le X$, one has for most elements in this family a strong Chebotarev density theorem. 
 In \S \ref{s:chebo} we recall the Chebotarev result from \cite{PTW} and the $L$-function results from \cite{KM} that are needed to   finish the proof of Theorem \ref{thm:main} in \S \ref{s:finish}.  In \S\ref{s:redei} we discuss a connection of R\'edei symbols and steps 1 through 4 of Construction \ref{con:doit}. In \S \ref{s:example} we discuss the example $N'$ above.

\section{Proof of Theorem \ref{thm:main}: Algebraic part}
\label{s:algebra}

Throughout this section we will assume the notations of steps 1 through 5 of Construction \ref{con:doit}.

\begin{lemma}
\label{lem:algpart1}  Suppose $(p_i,p_j) \in S$.  The class number of $L = \mathbb{Q}(\sqrt{p_i},\sqrt{p_j})$ is odd, and $(p_j,p_i) \in S$.  The unit group $O_L^*$ has generators $-1$, $e_i$, $e_j$ and $u$.    There is a unique
CM dihedral extension $F$ of $\mathbb{Q}$ that is quadratic over $L$ and unramified over all finite places of $L$.  One has $F = L(\sqrt{w_{ij}})$ when $w_{ij}$ is the unique product satisfying the conditions in step 5 of Construction \ref{con:doit}. We have $w_{ij} = w_{ji}$. 
\end{lemma}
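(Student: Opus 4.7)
My plan is to split the lemma into an arithmetic part establishing the odd class number, the generators of $O_L^*$, and the symmetry of $S$; and a class-field-theoretic part constructing $F$.

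For the arithmetic part, write $k_1 = \mathbb{Q}(\sqrt{p_i})$, $k_2 = \mathbb{Q}(\sqrt{p_j})$, $k_3 = \mathbb{Q}(\sqrt{p_i p_j})$, and apply Kuroda's class number formula for totally real biquadratic fields,
\[
4\, h(L) \;=\; Q(L)\, h(k_1) h(k_2) h(k_3),
\]
with $Q(L) = [O_L^* : \mu_L O_{k_1}^* O_{k_2}^* O_{k_3}^*]$. Gauss gives $h(k_1), h(k_2)$ odd. Genus theory gives $2 \mid h(k_3)$, and by the R\'edei--Reichardt theorem the $4$-rank of $\mathrm{Cl}^+(k_3)$ vanishes iff the R\'edei symbol $[p_i, p_j] = -1$. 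The R\'edei--Scholz identification expresses this symbol as $\bigl(\tfrac{e_i}{\mathfrak p}\bigr)$ for the prime $\mathfrak p \subset O_{k_1}$ above $p_j$ with $\sqrt{p_i} \equiv \beta_i$ in residue, which is precisely the Legendre symbol $\bigl(\tfrac{e'_i}{p_j}\bigr)$ computed in step 4. Combined with the consequence that $N(\epsilon_{p_i p_j}) = -1$, so that wide and narrow class numbers of $k_3$ agree, the hypothesis $(p_i, p_j) \in S$ gives $h(k_3) \equiv 2 \pmod 4$. Step 5 of Construction~\ref{con:doit} produces $u \in O_L$ with $u^2 \in \langle \mu_L, e_i, e_j, e_{ij}\rangle$, so $Q(L) \ge 2$; a short descent argument (at most one of $\pm e_{ij}, \pm e_i e_j e_{ij}$ is a square in $L$) rules out $Q(L)=4$, forcing $Q(L) = 2$. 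Kuroda's formula then yields $h(L) = h(k_1) h(k_2) h(k_3)/2$, an odd integer, and the asserted generators of $O_L^*$. Scholz's reciprocity $[p_i, p_j] = [p_j, p_i]$ gives $(p_j, p_i) \in S$.

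For the construction of $F$, the odd class number of $L$ implies every quadratic extension of $L$ unramified at all finite primes has the form $L(\sqrt \omega)$ for some unit $\omega \in O_L^*$ (if $(\omega) = \mathfrak a^2$ then $\mathfrak a$ is principal, so $\omega$ is a unit times a square). Since $p_i \equiv p_j \equiv 1 \pmod 8$, the prime $2$ splits completely in $L$ into four primes $\mathfrak q_1, \ldots, \mathfrak q_4$ with $L_{\mathfrak q_k} \cong \mathbb Q_2$, and $L(\sqrt \omega)/L$ is unramified at $\mathfrak q_k$ exactly when $\psi_k(\omega) \in \{1, 5\} \pmod 8$ (the subgroup of $\mathbb Q_2^*/(\mathbb Q_2^*)^2$ cutting out the unique unramified quadratic extension of $\mathbb Q_2$). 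These conditions assemble into an $\mathbb F_2$-linear map
\[
\phi\colon O_L^*/O_L^{*2} \;\longrightarrow\; \prod_{k=1}^4 (\mathbb Z/8)^*/\{1, 5\} \;\cong\; \mathbb F_2^4.
\]
Both sides are $4$-dimensional over $\mathbb F_2$ (the source because $O_L^* \cong \{\pm 1\} \times \mathbb Z^3$ by the first paragraph). The central step is to evaluate $\phi$ on the generators $\{-1, e_i, e_j, u\}$ using the data from steps 3 and 5 and to verify that $\phi$ has rank exactly $3$, so that $\ker \phi$ is one-dimensional, spanned by the class of $w_{ij} = (-1)^a e_i^b e_j^c u^d$.

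Finally, $F = L(\sqrt{w_{ij}})$ is Galois over $\mathbb Q$ because $\phi$ is $\mathrm{Gal}(L/\mathbb Q)$-equivariant (the $\mathfrak q_k$ are permuted by $\mathrm{Gal}(L/\mathbb Q)$), so the unique line $\ker \phi$ is Galois-stable. The field $F$ is CM because $(-1)^a$ is chosen to force $w_{ij} < 0$ at all four real places of $L$. To identify $\mathrm{Gal}(F/\mathbb Q)$ among the five isomorphism types of central extensions of $(\mathbb Z/2)^2$ by $\mathbb Z/2$, I analyze the cocycle $\bar\sigma \mapsto w_{ij}/\bar\sigma(w_{ij}) \bmod (L^*)^2$: the R\'edei condition $[p_i, p_j] = -1$ forces a nontrivial commutator, ruling out the abelian types $(\mathbb Z/2)^3$ and $\mathbb Z/4 \times \mathbb Z/2$, and $Q_8$ is excluded by the presence of non-central involutions in $\mathrm{Gal}(F/\mathbb Q)$ (lifts of a $\bar\sigma$ that fixes a real-quadratic subfield), leaving $D_8$. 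Uniqueness of $F$ follows from $\dim \ker \phi = 1$, and $w_{ij} = w_{ji}$ from the intrinsic and $(i,j)$-symmetric characterization of $F$. The main obstacle is the verification that $\phi$ has rank exactly $3$: this requires careful 2-adic bookkeeping of the fundamental units $e_i, e_j, u$ in all four completions above $2$, and uses the congruence $p_i \equiv p_j \equiv 1 \pmod 8$ essentially.
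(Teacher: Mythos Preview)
Your arithmetic first half takes a different route from the paper: where the paper gets $h_L$ odd directly by genus theory for $L/\mathbb{Q}(\sqrt{p_i})$ (using that $e_i$ is a nonsquare at the primes above $p_j$) and only \emph{afterwards} uses Herglotz to read off $Q(L)=2$ and $h(k_3)\equiv 2\pmod 4$, you try to run this in reverse via R\'edei--Reichardt and Scholz. That is a legitimate alternative, but watch the circularity: you invoke step~5 to get $Q(L)\ge 2$, whereas step~5 is exactly one of the claims the lemma is meant to justify. The inequality $Q(L)\ge 2$ should instead be read off from Kuroda's formula and your $h(k_3)\equiv 2\pmod 4$.

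The real problem is the second half. Your entire existence--uniqueness argument for $F$ rests on the assertion $\operatorname{rank}\phi=3$, which you explicitly leave as the ``main obstacle'' to be handled by ``careful 2-adic bookkeeping.'' This is not a routine computation: the residues of $e_i,e_j,u$ modulo the four primes above $2$ are \emph{not} pinned down by the congruences $p_i\equiv p_j\equiv 1\pmod 8$, so no finite case-check will do. The paper sidesteps this completely. For existence it constructs $F$ as the $(\mathbb{Z}/2)^2$ narrow ray class field of $\mathbb{Q}(\sqrt{p_i})$ of conductor $\mathcal{P}\mathcal{Q}\cdot\infty$, checking directly that $F/L$ is unramified at all finite places; this gives $\ker\phi\ne 0$ without ever looking at $2$-adic units. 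For uniqueness it computes the $2$-part of the \emph{narrow} class group of $L$ via the archimedean sign map (the images of $-1,e_i,e_j$ already fill an index-$2$ subgroup of $\{\pm1\}^4$), which forces $\lvert\ker\phi\rvert=2$. Both steps are short and global; neither requires knowing $\psi_k(e_i)$ for any $k$.

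Finally, your CM argument is incorrect as stated. In step~5 the exponent $a$ in $w_{ij}=(-1)^a e_i^b e_j^c u^d$ is fixed by the $2$-adic conditions $\psi(w_{ij})\in\{1,5\}\pmod 8$, not by any sign condition at infinity; you cannot simply assert that this forces $w_{ij}$ to be totally negative. In the paper the CM property is a byproduct of the narrow-class-field construction: $F$ corresponds to the kernel of $\mathrm{Cl}^+(L)\to\mathrm{Cl}(L)$, hence is ramified at the archimedean places, hence totally complex.
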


\begin{proof}
 By genus theory applied to $\mathbb{Q}(\sqrt{p_i})/\mathbb{Q}$ and 
$\mathbb{Q}(\sqrt{p_j})/\mathbb{Q}$ the class numbers $h_{\mathbb{Q}(\sqrt{p_i})}$ and $h_{\mathbb{Q}(\sqrt{p_j})}$ are odd.  We know that $p_j$ splits in $\mathbb{Q}(\sqrt{p_i})$ by step 2, and a fundamental unit $e_i$ of $\mathbb{Q}(\sqrt{p_i})$ is not a square at a place over $p_j$ in $\mathbb{Q}(\sqrt{p_i})$ by step 4.  Hence by genus theory for $L/\mathbb{Q}(\sqrt{p_i})$ we conclude that $h_L$ is odd.  

We now want to show that $(p_j,p_i) \in S$.  The condition in step 2 holds for $(p_j,p_i)$ by quadratic reciprocity.  So we have to show the condition in step 4 for $(p_j,p_i)$. Suppose the condition in step 4 fails for $(p_j,p_i)$.  Then 
$e_j$ is a square modulo one of the primes over $p_i$ in $\mathbb{Q}(\sqrt{p_j})$.  However, the conjugate of $e_j$ over $\mathbb{Q}$ is $\pm e_j^{-1}$ and $-1$ is a square mod $p_i$.  It follows the $e_j$ is a square modulo both primes over $p_i$ in $\mathbb{Q}(\sqrt{p_j})$ if the condition  in step 4 fails for $(p_j,p_i)$.  However, 
genus theory would now imply that $L$ has even class number, a contradiction. So the condition in step 4  holds for
$(p_j,p_i)$, and so $(p_j,p_i) \in S$.

Since $L/\mathbb{Q}(\sqrt{p_i p_j})$ is an unramified quadratic extension and $h_L$ is odd, we see that $2$ divides $h_{\mathbb{Q}(\sqrt{p_i p_j})}$ but $4$ does not.  It is a classical result of Herglotz (see \cite{He} and \cite[Thm. 3.1]{Sime}) that
\begin{equation}
\label{eq:biquadclass}
h_L = \frac{1}{4}[O_L^*:O_{\mathbb{Q}(\sqrt{p_i})}^*\cdot O_{\mathbb{Q}(\sqrt{p_j})}^* 
\cdot O_{\mathbb{Q}(\sqrt{p_i p_j})}^* ] \cdot 
h_{\mathbb{Q}(\sqrt{p_i})} \cdot 
h_{\mathbb{Q}(\sqrt{p_j})} \cdot
h_{\mathbb{Q}(\sqrt{p_i p_j})}
\end{equation} 
Let $\{H_k\}_{k = 1}^3$ be the set of the three order two subgroups of $\mathrm{Gal}(L/\mathbb{Q}) \cong (\mathbb{Z}/2) \times (\mathbb{Z}/2)$.  The identity 
$$\sum_{k = 1}^3 (\sum_{\sigma \in H_k} \sigma ) - \sum_{\tau \in \mathrm{Gal}(L/\mathbb{Q})} \tau = 2$$
in the integral group ring of $\mathrm{Gal}(L/\mathbb{Q})$ shows that $(O_L^*)^2$ is contained in $J = O_{\mathbb{Q}(\sqrt{p_i})}^*\cdot O_{\mathbb{Q}(\sqrt{p_j})}^* 
\cdot O_{\mathbb{Q}(\sqrt{p_i p_j})}^* $.  Combining this with (\ref{eq:biquadclass}) and the above facts about class numbers shows $O_L^*/J$ has order $2$.
 Thus if $e_i$, $e_j$ and $e_{ij}$ are fundamental units of $\mathbb{Q}(\sqrt{p_i})$, $\mathbb{Q}(\sqrt{p_j})$ and $\mathbb{Q}(\sqrt{p_i p_j})$, there will be a unique quadruple $(c_0, c_i, c_j, c_{ij}) \in \{0,1\}^4$ that is not $(0,0,0,0)$ such that  $(-1)^{c_0} e_i^{c_i} e_j^{c_j} e_{ij}^{c_{ij}} = u^2$ for some $u \in O_L^*$.  By genus theory, $e_i$ and $e_j$ have norm $-1$ to $\mathbb{Q}$.  By considering the signs
of $(-1)^{c_0} e_i^{c_i} e_j^{c_j} e_{ij}^{c_{ij}} $ at infinity, we see that the vector $(c_0,c_i,c_j,c_{ij})$ is determined uniquely by the condition that
 $(-1)^{c_0} e_i^{c_i} e_j^{c_j} e_{ij}^{c_{ij}} $ is totally positive. In step 3 of Construction \ref{con:doit} we normalized $e_{ij}$ to have at least one positive embedding. This leads to being able to take $u^2 = e_{ij}$ if $e_{ij}$ is totally positive and $u^2 = s   e_i e_j e_{ij}$ otherwise for a unique choice of $s \in \{\pm 1\}$, as in step 5 of Construction \ref{con:doit}.  Furthermore $O_L^*$ is generated by $-1, e_i, e_j$ and $u$.
 
 The prime $p_j$ splits as $\mathcal{P} \cdot \mathcal{Q}$ in $\mathbb{Q}(\sqrt{p_i})$. Since $h_{\mathbb{Q}(\sqrt{p_i})}$ is odd and $e_i$ has norm $-1$, the maximal elementary abelian two-power quotient of the narrow ray class group of $\mathbb{Q}(\sqrt{p_i})$ modulo  $\mathcal{P} \cdot \mathcal{Q}$ is a Klein four group.  This quotient corresponds to a Klein four extension $F$ of $\mathbb{Q}(\sqrt{p_i})$ which is Galois over $\mathbb{Q}$ and is a quadratic extension of $L$ that is unramified over all finite primes of $L$.  Since $L$ has odd class number, this forces $F$ to be  the unique totally complex dihedral extension of $\mathbb{Q}$ that contains $L$ and is unramified over all the finite places of $L$.  We showed that the group of signs at infinity in $L$ generated by units has order at least $8$.  This and the fact that $L$ has odd class number shows that the two-part of the narrow classgroup of $L$ has order $2$. Thus $F$ is  the unique quadratic extension of $L$ that is unramified over all finite places of $L$.  We arranged that the prime $2$ splits completely in $L$, so Kummer theory and the fact that $h_L$ is odd now show $F = L(\sqrt{w_{ij}})$ for some $w_{ij}$ as in step 5 of Construction \ref{con:doit}.  Conversely, if $w_{ij}$ satisfies the conditions in step 5, $L(\sqrt{w_{ij}})$ is a quadratic extension of $L$ that is unramified over all finite places of $L$, so  we must have $F =L(\sqrt{w_{ij}})$.  This proves that $w_{ij}$ is uniquely determined by the conditions in step 5. For this reason, $w_{ij} = w_{ji}$. 
 \end{proof}

\begin{lemma}
\label{lem:under}  Let $N$ be the field defined in Theorem \ref{thm:main}, namely the extension of $\mathbb{Q}$ generated by all of the numbers $\sqrt{p_i}$, $\sqrt{p_j}$ and $\sqrt{w_{ij}}$ for all  $(p_i,p_j) \in S$.  Define $U$  to be the set of primes $p_i$ for which  $(p_i,p_j) \in S$ for some $p_j$, and let $\ell = \# U$. Then $N$ is a Galois CM extension of $\mathbb{Q}$ and $\mathrm{Gal}(N/\mathbb{Q})$ is a central extension of an elementary abelian two-group by an elementary abelian two-group.  We have  
\begin{equation}  \label{eq:lowershort}
 \mathrm{ord}_2([N:\mathbb{Q}]) =  \frac{\#S}{2} + \ell.
 \end{equation}
 The field $N$ is an elementary abelian extension of $F_1 = \mathbb{Q}(\{\sqrt{p_i}:p_i \in U\})$ that is unramified over all finite places of $F_1$.  
 \end{lemma}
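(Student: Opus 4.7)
First, I would dispose of the structural claims. Each $F_{ij} = L_{ij}(\sqrt{w_{ij}})$ is a Galois CM extension of $\mathbb{Q}$ by Lemma \ref{lem:algpart1}, and $F_1$ is Galois and totally real, so their compositum $N$ is Galois over $\mathbb{Q}$ and CM (it is totally complex since it contains each CM field $F_{ij}$, and carries a well-defined complex conjugation inherited from any embedding $\bar{\mathbb{Q}}\hookrightarrow\mathbb{C}$). Since $N$ is generated over $F_1$ by the square roots $\sqrt{w_{ij}}$, $N/F_1$ is elementary abelian of exponent $2$. For unramifiedness at finite places, Lemma \ref{lem:algpart1} gives that $F_{ij}/L_{ij}$ is unramified at every finite place; base-changing along $L_{ij}\hookrightarrow F_1$ shows that each $F_1(\sqrt{w_{ij}}) = F_1\cdot F_{ij}$ is unramified at every finite place of $F_1$, and the compositum $N$ inherits this property.

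Next, I would verify that the extension $1\to \mathrm{Gal}(N/F_1)\to \mathrm{Gal}(N/\mathbb{Q})\to \mathrm{Gal}(F_1/\mathbb{Q})\to 1$ is central. Let $W\subset F_1^*/(F_1^*)^2$ be the subgroup generated by the classes $[w_{ij}]$, so that Kummer theory identifies $\mathrm{Gal}(N/F_1)$ with $\mathrm{Hom}(W,\{\pm 1\})$ and the conjugation action of $\mathrm{Gal}(F_1/\mathbb{Q})$ is dual to its action on $W$. Thus centrality reduces to showing $\sigma(w_{ij})/w_{ij}\in (F_1^*)^2$ for every $\sigma\in\mathrm{Gal}(F_1/\mathbb{Q})$. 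When $\sigma|_{L_{ij}}$ is trivial this is immediate. Otherwise, lift $\sigma|_{L_{ij}}$ to $\tilde\sigma\in\mathrm{Gal}(F_{ij}/\mathbb{Q})$ (possible since $F_{ij}/\mathbb{Q}$ is Galois) and write $\tilde\sigma(\sqrt{w_{ij}}) = a + b\sqrt{w_{ij}}$ with $a, b\in L_{ij}$. Squaring and using that $\tilde\sigma(w_{ij}) = \sigma(w_{ij})\in L_{ij}$ forces $ab = 0$. The case $b = 0$ would give $\sigma(w_{ij}) = a^2\in (L_{ij}^*)^2$, contradicting $w_{ij}\notin (L_{ij}^*)^2$ (a consequence of $F_{ij}\ne L_{ij}$). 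Hence $a = 0$, and $\sigma(w_{ij})/w_{ij} = b^2\in (L_{ij}^*)^2\subset (F_1^*)^2$.

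Finally, since $[F_1:\mathbb{Q}] = 2^\ell$ and $[N:F_1] = 2^{\dim_{\mathbb{F}_2} W}$, inequality (\ref{eq:lowershort}) reduces to the assertion $\dim_{\mathbb{F}_2} W\ge \#S/2$, i.e.\ to the linear independence in $F_1^*/(F_1^*)^2$ of the classes $[w_{ij}]$, one per unordered pair in $S$. Suppose $\prod_{T} w_{ij} = \alpha^2$ for some nonempty collection $T$ of such unordered pairs and some $\alpha\in F_1^*$. Each $w_{ij}$ is totally negative (because $F_1(\sqrt{w_{ij}})$ is CM over the totally real $F_1$), so $\prod_T w_{ij}$ is totally negative when $\#T$ is odd, ruling out that case. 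The main obstacle is the even-$\#T$ case: then $\prod_T w_{ij}$ is totally positive and $F_1(\sqrt{\prod_T w_{ij}})/F_1$ is unramified at every place, corresponding via class field theory to a quadratic character of $\mathrm{Cl}(F_1)$. Ruling out the triviality of this character for every nonempty even $T$ is where I expect the main work to lie, and I would attack it via a genus-theoretic analysis of the $2$-elementary quotient of $\mathrm{Cl}^+(F_1)$ for the multiquadratic field $F_1 = \mathbb{Q}(\{\sqrt{p_i}: p_i\in U\})$, exploiting the quadratic-residue conditions imposed in step 2 of Construction \ref{con:doit}.
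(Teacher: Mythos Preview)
Your first two paragraphs are sound; in fact your Kummer-theoretic verification of centrality is more explicit than the paper's one-line claim.

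The genuine gap is in the third paragraph. You correctly reduce the inequality $\mathrm{ord}_2([N:F_1])\ge \#S/2$ to the linear independence of the classes $[w_{ij}]$ in $F_1^*/(F_1^*)^2$, and your sign argument disposes of the odd-$\#T$ relations. But the even-$\#T$ case is left to a proposed ``genus-theoretic analysis of $\mathrm{Cl}^+(F_1)$'' that you do not carry out. As written this is not an argument but a plan, and it aims at a harder target than necessary: you would need to show that each nonempty even product $\prod_T w_{ij}$ gives a \emph{nontrivial} everywhere-unramified quadratic character of $F_1$, which amounts to understanding enough of the $2$-torsion of $\mathrm{Cl}(F_1)$ for a multiquadratic field with $\ell$ ramified primes. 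There is also a small muddle in your phrasing: you begin by assuming $\prod_T w_{ij}=\alpha^2$ and then speak of ``ruling out the triviality of this character,'' but under that assumption the character \emph{is} trivial; what you actually need is to show, for each nonempty even $T$, that the associated extension is nontrivial.

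The paper avoids $\mathrm{Cl}(F_1)$ entirely and argues on the Galois side. It presents $\mathrm{Gal}(N/\mathbb{Q})$ as a quotient of $\mathcal{G}/\mathcal{G}_2$, where $\mathcal{G}$ is the free pro-$2$ group on generators $x_1,\dots,x_\ell$ mapping to generators $x_i'$ of inertia at the $p_i$. The kernel $\mathcal{R}$ lies in $V=\mathcal{G}_1/\mathcal{G}_2$, which has the basis $\{x_i^2\}\cup\{[x_i,x_j]:i<j\}$. The key step is: if $(p_i,p_j)\in S$ then no relation in $\mathcal{R}$ involves $[x_i,x_j]$. This is seen by projecting onto $\mathrm{Gal}(F_{ij}/\mathbb{Q})\cong D_8$: every $x_k$ with $k\notin\{i,j\}$ dies (since $F_{ij}/\mathbb{Q}$ is unramified at $p_k$), as do all $x_m^2$ (inertia has order $2$), so every basis element of $V$ except $[x_i,x_j]$ lies in the kernel of the projection; if some relation involved $[x_i,x_j]$ nontrivially, the projection would factor through the abelian group $\mathcal{G}/\mathcal{G}_1$, contradicting that $D_8$ is nonabelian. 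Hence $\mathcal{R}$ is contained in the span of the remaining basis vectors, giving $\dim V/\mathcal{R}\ge \#S/2$.

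In your Kummer language this becomes an explicit dual system that replaces the missing genus-theory step: for each $(i,j)\in S$ the commutator $[x_i',x_j']\in\mathrm{Gal}(N/F_1)$ acts nontrivially on $\sqrt{w_{ij}}$ (its image in $\mathrm{Gal}(F_{ij}/\mathbb{Q})$ is the nontrivial central element of $D_8$) and trivially on $\sqrt{w_{kl}}$ whenever $\{k,l\}\neq\{i,j\}$ (since at least one of $x_i',x_j'$ restricts to the identity on $F_{kl}$). These $\#S/2$ elements of $\mathrm{Gal}(N/F_1)=\mathrm{Hom}(W,\mu_2)$ separate the $[w_{ij}]$, hence the $[w_{ij}]$ are independent. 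This handles odd and even $T$ uniformly and uses only the dihedral structure of each $F_{ij}/\mathbb{Q}$, not the class group of $F_1$.
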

 
 \begin{proof} 
One has  $\mathrm{Gal}(F_1/\mathbb{Q}) = (\mathbb{Z}/2)^\ell$ and $F_1$ is the maximal elementary abelian two-extension of $\mathbb{Q}$ that is unramified outside of $U $.  Clearly $F_1$ contains every biquadratic extension $L = \mathbb{Q}(\sqrt{p_i},\sqrt{p_j})$ associated to a pair $(p_i,p_j) \in S$. Recall $L(\sqrt{w_{ij}})$ is unramified over all finite places of $L$.
By definition, $N$ is the compositum over $\mathbb{Q}$ of all such $L(\sqrt{w_{ij}})$.  We conclude that $N$ is unramified over all finite places of $F_1$, and $\mathrm{Gal}(N/F_1)$ is an elementary abelian two-group that is central in
the group $\mathrm{Gal}(N/\mathbb{Q})$.  Furthermore, the elementary abelian two-group $\mathrm{Gal}(F_1/\mathbb{Q})$ is the maximal abelian quotient of $\mathrm{Gal}(N/\mathbb{Q})$. Hence the inertia groups in $\mathrm{Gal}(N/\mathbb{Q})$ of every place of $N$ over $p_i \in U$ have order $2$.  Finally $N$ is a CM field because complex conjugation is non-trivial and lies in $\mathrm{Gal}(N/F_1)$.  This implies the first statement of the lemma.

 Let $\mathcal{G}$ be the free pro-$2$ group on generators $x_1,\ldots,x_\ell$.  The first two terms in the descending central two-series of $\mathcal{G}$ are defined by 
 $$\mathcal{G}_1 = [\mathcal{G},\mathcal{G}] \cdot \mathcal{G}^2\quad \mathrm{and}\quad \mathcal{G}_2 = [\mathcal{G},\mathcal{G}_1] \cdot \mathcal{G}_1^2.$$
 Thus  $\mathcal{G}/\mathcal{G}_1$ is the maximal elementary abelian two-quotient of $\mathcal{G}$, and $\mathcal{G}/\mathcal{G}_2$ is the maximal quotient of $\mathcal{G}$ that is a central elementary abelian two-extension of an elementary abelian two-quotient of $\mathcal{G}$.  
 
 We map $\mathcal{G}$ to $\mathrm{Gal}(N/\mathbb{Q})$ by sending $x_i$ to a generator $x'_i$ of an inertia group of a place over $p_i$.    The images of the $x'_i$ in the maximal elementary abelian quotient $\mathrm{Gal}(F_1/\mathbb{Q})$ of $\mathrm{Gal}(N/\mathbb{Q})$ are a minimal set of generators for $\mathrm{Gal}(F_1/\mathbb{Q})$.  This forces the $x'_i$ to generate the two-group $\mathrm{Gal}(N/\mathbb{Q})$.  We
 arrive at a surjection
 \begin{equation}
 \label{eq:lambda}
 \lambda: \mathcal{G}/\mathcal{G}_2 \to \mathrm{Gal}(N/\mathbb{Q})
 \end{equation}
  which gives
 isomorphisms
 $$\mathcal{G}/\mathcal{G}_1 =   \frac{\mathrm{Gal}(N/\mathbb{Q})}{[\mathrm{Gal}(N/\mathbb{Q}),\mathrm{Gal}(N/\mathbb{Q})] \cdot \mathrm{Gal}(N/\mathbb{Q})^2} = \mathrm{Gal}(F_1/\mathbb{Q})$$
 on maximal elementary abelian two-quotients.  Accordingly, the kernel
 of $\lambda$ is a subgroup $\mathcal{R}$ of the $\mathbb{Z}/2$-vector space
 $$V = \mathcal{G}_1/\mathcal{G}_2.$$
 Here $V$ has a basis given by
 \begin{equation}
 \label{eq:Basis}
 B =  \{x_i^2 : 1 \le i \le \ell\}  \cup \{[x_i,x_j]: 1 \le i < j \le \ell \}.
 \end{equation}
 
 We now claim that if $(p_i,p_j) \in S$ and $i < j$, then no relation $r \in \mathcal{R}$ can involve a non-zero coefficient of $[x_i,x_j]$ when $r$ is written as a linear combination of elements of the basis $B$ with coefficients in $\mathbb{Z}/2$.
 Suppose to the contrary that such a relation exists.  Consider the 
 kernel of the surjection 
 $$\pi: \mathcal{G}/\mathcal{G}_2 \to \mathrm{Gal}(L(\sqrt{w_{ij}})/\mathbb{Q})$$
 when $L = \mathbb{Q}(\sqrt{p_i},\sqrt{p_j})$ for some $(p_i,p_j) \in S$.  Suppose $k \not \in \{i,j\}$. Since 
 $L(\sqrt{w_{ij}})/\mathbb{Q}$ is unramified at all finite places of $\mathbb{Q}$ outside of $\{p_i,p_j\}$, and $x_k$  goes to a generator of an inertia group over $p_k$, we see $x_k \in \mathrm{ker}(\pi)$.  We have $[x_i,x_k] = (x_i x_k x_i^{-1}) x_k^{-1}$
 and both $x_i x_k x_i^{-1}$ and $x_k^{-1}$ lie in inertia groups of primes over $p_k$.  So $[x_i,x_k] \in \mathrm{ker}(\pi)$ and similarly $[x_j,x_k] \in \mathrm{ker}(\pi)$.  Finally, $x_i^2$ and $x_j^2$ are in $\mathrm{ker}(\pi)$ since the inertia groups of $p_i$ and $p_j$ in $\mathrm{Gal}(L(w_{ij})/\mathbb{Q})$ have order $2$.  Thus $\mathrm{ker}(\pi)$ contains all elements of the basis $B$ in  (\ref{eq:Basis}) except possibly for $[x_i,x_j]$.  However, if a relation $r$ involves $[x_i,x_j]$ non-trivially, then
 since $r \in \mathrm{ker}(\pi)$ we would have $[x_i,x_j] \in \mathrm{ker}(\pi)$.  Hence $\mathcal{G}_1/\mathcal{G}_2 \subset \mathrm{ker}(\pi)$. Then $\pi$ factors  through $\mathcal{G}/\mathcal{G}_1 = \mathrm{Gal}(F_1/\mathbb{Q})$.  This is impossible because $\pi$ is surjective, $\mathrm{Gal}(F_1/\mathbb{Q})$ is an elementary abelian two-group and $\mathrm{Gal}(L(\sqrt{w_{ij}})/\mathbb{Q})$ is dihedral of order $8$.  
 (One can also prove this fact using more sophisticated arguments involving cup products;  see \cite{McS}.)
 
 This proves that if 
 $$B' = \{x_i^2: 1 \le i \le \ell\} \cup \{[x_k,x_z]: (p_k,p_z) \not \in S \quad \mathrm{and} \quad 1 \le k < z \le \ell\} \subset B$$
 then $\mathcal{R}$ is contained in the subgroup  
$V'$  of $\mathcal{G}_1/\mathcal{G}_2$ generated by $B'$.   Here $\lambda$ maps $V/\mathcal{R}$ injectively into $\mathrm{Gal}(N/F_1)$, and $V/\mathcal{R}$ surjects onto $V/V'$.  Therefore 
 $$ \# S/2 = \# B - \# B'  = \mathrm{dim}_{\mathbb{Z}/2} V/ V'  \le \mathrm{dim}_{\mathbb{Z}/2}V/\mathcal{R}   \le \mathrm{ord}_2(\# \mathrm{Gal}(N/F_1))$$ 
where the first equality follows from the fact that $(p_k,p_z) \in S$ if and only if $(p_z,p_k) \in S$.
 We end up with the bound
 \begin{equation}
 \label{eq:lowerb}
\mathrm{ord}_2([N:F_1]) \ge \#S/2.
 \end{equation}
 However, $N$ is obtained from $F_1$ by adjoining $\#S / 2$ square roots of elements of $F_1$, so (\ref{eq:lowerb}) must
 be an equality.  Since $ \mathrm{ord}_2([F_1:\mathbb{Q}]) = \ell$, this gives the equality in (\ref{eq:lowershort}) and completes the proof.
 \end{proof}
 
 \begin{cor}
 \label{cor:niceending}
 To prove Theorem \ref{thm:main}, it will suffice to show that there is a constant $c' > 0$ such that 
 \begin{equation}
 \label{eq:Slower}
 \#S /2 \ge c' \frac{X^2}{(\log (X))^2}.
 \end{equation}
 \end{cor}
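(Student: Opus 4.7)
The plan is to derive each conclusion of Theorem \ref{thm:main} from the single hypothesis \eqref{eq:Slower}, by combining Lemma \ref{lem:under} with elementary discriminant and complexity bookkeeping; the substantive analytic content of the theorem is entirely hidden in \eqref{eq:Slower}.

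For the first inequality in \eqref{eq:bigineq}, Lemma \ref{lem:under} gives $\mathrm{ord}_2([N:\mathbb{Q}]) \ge \#S/2 + \ell \ge \#S/2$, so $\log[N:\mathbb{Q}] \ge (\log 2)\cdot \#S/2$; combining with \eqref{eq:Slower} gives the claimed bound with effective $c = c'\log 2$. For the second inequality in \eqref{eq:bigineq}, which I expect to be unconditional, observe that since every $p_i \equiv 1 \pmod 8$, the totally real field $F_1$ is unramified at $2$; Lemma \ref{lem:under} says $N/F_1$ is unramified at every finite place, so $N/\mathbb{Q}$ ramifies only at primes in $U$, each with inertia group of order $2$ and hence tame. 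The standard different formula for tame ramification in a Galois extension of degree $n = [N:\mathbb{Q}]$ then gives $D_N = \prod_{p_i \in U} p_i^{n/2}$, so
\[
\log D_N^{1/n} \;=\; \tfrac12 \sum_{p_i \in U} \log p_i \;\le\; \tfrac12 \theta(X) \;\le\; \tfrac12 X\log X
\]
by the trivial estimate $\pi(X) \le X$.

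Dividing the two inequalities in \eqref{eq:bigineq} yields $\log D_N^{1/n}/\log n \le (\log X)^3/(2cX) \to 0$ as $X \to \infty$, which gives $D_N^{1/n} = O(n^\epsilon)$ with effective implied constant for every $\epsilon > 0$. For efficiency, Construction \ref{con:doit} only requires enumerating primes up to $X$, computing Legendre symbols, expanding continued fractions of $\sqrt{d}$ for $d = O(X^2)$, performing arithmetic in the degree-$4$ field $L$, and testing the finitely many sign conditions of step 5, all of which run in time polynomial in $X$. Since the first inequality in \eqref{eq:bigineq} forces $X$ to be at most polynomial in $\log[N:\mathbb{Q}]$, the total running time is polynomial in $\log[N:\mathbb{Q}]$.

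I anticipate no real obstacle in the corollary itself: every step above is a direct consequence of Lemma \ref{lem:under} together with standard facts about tame ramification and elementary complexity estimates. The genuine difficulty that remains is the lower bound \eqref{eq:Slower}, whose proof will rely in Sections \ref{s:chebo} and \ref{s:finish} on the strong Chebotarev theorem of Pierce--Turnage-Butterbaugh--Wood and the L-function estimates of Kowalski--Michel.
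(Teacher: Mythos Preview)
Your proof is correct and follows essentially the same approach as the paper: the first inequality in \eqref{eq:bigineq} comes from Lemma~\ref{lem:under} with $c=c'\log 2$, the second from the fact that $N/F_1$ is unramified at finite places so that $D_N^{1/[N:\mathbb{Q}]}=\prod_{p_i\in U}p_i^{1/2}$, and efficiency follows from polynomial-in-$X$ bounds on the steps of Construction~\ref{con:doit}. The only cosmetic differences are that the paper invokes the invariance of root discriminant under unramified extensions rather than the tame different formula (yielding the same product), bounds $\prod p_i$ by $X!$ rather than via $\theta(X)$, and cites \cite[Thm.~5.5]{Lenstra} explicitly for the fundamental-unit computation.
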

 
 \begin{proof} The field $N$ is a Galois CM extension of $\mathbb{Q}$ by Lemma \ref{lem:under}, and (\ref{eq:Slower}) implies the first inequality in 
 (\ref{eq:bigineq}) with $c = c' \log(2) $.  For the second inequality in (\ref{eq:bigineq}), we know by Lemma 
\ref{lem:under} that $N$ is unramified over all finite places of $F_1 = \mathbb{Q}(\{\sqrt{p_i}:p_i \in U\})$.  Therefore
 $$D_N^{1/[N:\mathbb{Q}]} = D_{F_1}^{1/[F_1:\mathbb{Q}]} = \prod_{p_i \in U} p_i^{1/2} \le (X!)^{1/2}.$$
 This gives
 $$\log{ (D_{N}^{1/[N:\mathbb{Q}]})} \le \frac{1}{2} \log (X!) \le \frac{1}{2} X \log(X)$$
 as claimed.
 
 It remains to show that the field $N$ can be efficiently constructed.  Since $\log [N:\mathbb{Q}]$ is bounded below by a positive constant times $(X/\log X)^2$, it will suffice to show that one can write down a set of equations in time bounded by a polynomial in $X$ whose roots generate $N$.   By \cite[Thm. 5.5]{Lenstra} one can find the fundamental units of  $\mathbb{Q}(\sqrt{p_i})$, $\mathbb{Q}(\sqrt{p_j})$ and $\mathbb{Q}(\sqrt{p_i p_j})$ in time bounded by a polynomial in $X$.  Calculating quadratic residues mod $p_i$ is polynomial time in $X$, as is finding square roots of numbers mod $p_i$ that are known to be squares mod $p_i$.  Finally, Hensel's Lemma gives an algorithm that runs in time bounded by a polynomial in $X$ for calculating explicitly all surjective algebra homomorphisms from the integers of $\mathbb{Q}(\sqrt{p_i},\sqrt{p_j})$
 to $\mathbb{Z}/8$ when $p_i \ne p_j$ are primes congruent to $1$ mod $8$ that are bounded by $X$.
 Since the number of primes $p_i \le X$ is bounded by $X$, this shows $N$ can be efficiently constructed.
 \end{proof}
 
 In the next section we will recall some explicit Chebotarev theorems that are sufficient to prove a lower bound for $\# S$ of the required kind.  To apply these theorems we need some further algebraic results.

 \begin{lemma}
 \label{lem:Frobenius}  Let $p_i$ be a prime with $p_i \equiv 1 $ mod $8$ and $p_i \le X$.  Let $e_i$ be a fundamental unit of $\mathbb{Q}(\sqrt{p_i})$.  Define $Y_1 = \mathbb{Q}(\sqrt{-1},\sqrt{p_i}, \sqrt{e_i})$ and  $Y = \mathbb{Q}(\zeta_8,\sqrt{p_i}, \sqrt{e_i})$ when $\zeta_8$ is a primitive $8^{th}$ root of unity.  Then $Y_1$ is a  dihedral Galois extension of $\mathbb{Q}$ of degree $8$.  
 The field 
 $Y$ is the compositum of the disjoint fields $\mathbb{Q}(\sqrt{2})$ and $Y_1$.  Thus 
 \begin{equation}
 \label{eq:Yisom}
 \mathrm{Gal}(Y/\mathbb{Q}) = \mathrm{Gal}(\mathbb{Q}(\sqrt{2})/\mathbb{Q}) \times \mathrm{Gal}(Y_1/\mathbb{Q}).
 \end{equation}
 The non-trivial one dimensional complex characters of $\mathrm{Gal}(Y/\mathbb{Q})$ are the seven order two  characters factoring through $\mathrm{Gal}(\mathbb{Q}(\zeta_8,\sqrt{p_i})/\mathbb{Q}) \cong (\mathbb{Z}/2)^3$.  The conductors of these characters are $4$, $8$, $8$, $p_i$, $4p_i$, $8p_i$ and $8p_i$.  There are two irreducible odd dihedral characters $\rho$ and $\rho\cdot \chi_2$ of $\mathrm{Gal}(Y/\mathbb{Q})$, where $\rho$ factors through $\mathrm{Gal}(Y_1/\mathbb{Q})$ and $\chi_2$ is the non-trivial order two character factoring through $\mathrm{Gal}(\mathbb{Q}(\sqrt{2})/\mathbb{Q})$.    The conductors of  $\rho$ and $\chi_2\cdot \rho$ divide  $64 p_i$.  The discriminant $d_Y$ of $Y$ divides $2^{16} p_i^4 \cdot (64 p_i)^4 = 2^{40} p_i^8$.  
 The group  $\mathrm{Gal}(Y/\mathbb{Q}(\zeta_8,\sqrt{p_i}))$ is a central order two subgroup of $\mathrm{Gal}(Y/\mathbb{Q})$.  
   An odd prime $p_j \le X$ different from $p_i$ has $(p_i,p_j) \in S$ if and only if the Frobenius conjugacy class of $p_j$ in $\mathrm{Gal}(Y/\mathbb{Q})$ equals the non-trivial element of 
 $\mathrm{Gal}(Y/\mathbb{Q}(\zeta_8,\sqrt{p_i}))$.  \end{lemma}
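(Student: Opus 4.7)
The plan is to first establish the Galois-theoretic structure of $Y_1$ and $Y$, then enumerate characters and bound conductors, and finally translate the ``useful pair'' conditions into Frobenius data.

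Since $p_i \equiv 1 \pmod 8$ implies $h_{\mathbb{Q}(\sqrt{p_i})}$ is odd by genus theory, the fundamental unit $e_i$ satisfies $N(e_i)=-1$, so $e_i\bar{e_i}=-1$ and hence $\sqrt{-1}$ lies in the Galois closure of $\mathbb{Q}(\sqrt{p_i},\sqrt{e_i})$. That closure is therefore $Y_1$; it has degree $8$ since $\mathbb{Q}(\sqrt{p_i},\sqrt{e_i})$ admits a real embedding and so cannot contain $i$. To identify $\mathrm{Gal}(Y_1/\mathbb{Q})$, I would lift the commuting involutions $\sigma:\sqrt{p_i}\mapsto-\sqrt{p_i}$ and $\tau:i\mapsto-i$ of $\mathbb{Q}(\sqrt{p_i},i)$ to automorphisms $\tilde\sigma,\tilde\tau$ of $Y_1$ with $\tilde\sigma(\sqrt{e_i})=i/\sqrt{e_i}$ and $\tilde\tau(\sqrt{e_i})=\sqrt{e_i}$, and verify by direct calculation that $\tilde\sigma\tilde\tau$ has order $4$, forcing the group to be $D_4$. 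The maximal abelian subfield of $Y_1$ is $\mathbb{Q}(\sqrt{p_i},i)$, whose quadratic subfields $\mathbb{Q}(i),\mathbb{Q}(\sqrt{p_i}),\mathbb{Q}(\sqrt{-p_i})$ never equal $\mathbb{Q}(\sqrt 2)$; so $Y_1\cap\mathbb{Q}(\sqrt 2)=\mathbb{Q}$ and $\mathrm{Gal}(Y/\mathbb{Q})\cong\mathbb{Z}/2\times D_4$.

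The abelianization $(\mathbb{Z}/2)^3\cong\mathrm{Gal}(\mathbb{Q}(\zeta_8,\sqrt{p_i})/\mathbb{Q})$ produces $7$ non-trivial linear characters, cutting out the quadratic fields $\mathbb{Q}(\sqrt d)$ for $d\in\{-1,2,-2,p_i,-p_i,2p_i,-2p_i\}$ with the listed conductors. The two $2$-dimensional irreducibles are $\rho$ (inflated from the standard $D_4$-irrep) and $\rho\otimes\chi_2$. To prove oddness, I would write $\rho=\mathrm{Ind}_{\mathrm{Gal}(Y_1/K)}^{\mathrm{Gal}(Y_1/\mathbb{Q})}\psi$ where $K$ is the fixed field of the unique cyclic $C_4\subset D_4$. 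Since $\tilde\sigma\tilde\tau$ generates $C_4$ and its image $\sigma\tau$ in the abelianization fixes $\sqrt{p_i}\cdot i=\pm\sqrt{-p_i}$, we get $K=\mathbb{Q}(\sqrt{-p_i})$, which is imaginary; complex conjugation lies outside $\mathrm{Gal}(Y_1/K)$, so $\rho(c)$ has trace $0$ and determinant $-1$, and the same holds for $\chi_2\rho$ because $\chi_2(c)=1$. For the conductor at $p_i$: inertia is a non-central involution of $D_4$, and $\rho$ restricted to it has eigenvalues $\pm 1$, giving exponent $1$. At $2$, the completion of $\mathbb{Q}(\sqrt{p_i},i)$ at a prime above $2$ is $\mathbb{Q}_2(i)$ (since $p_i$ is a $2$-adic square), and $Y_1$ is obtained locally by adjoining a square root of a unit; the standard local estimate gives relative discriminant valuation at most $2v_{\mathfrak{P}}(2)=4$ at each of the two primes over $2$, which is sharper than the generic $2v(2)+1$ bound for uniformizer adjunctions. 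The conductor-discriminant formula $d_{Y_1}=\mathfrak f(\rho)^2\cdot 16p_i^2$ then gives $v_2(\mathfrak f(\rho))\le 6$, so $\mathfrak f(\rho)\mid 64p_i$; the same estimate applies to $\chi_2\rho$, and conductor-discriminant for $Y/\mathbb{Q}$ yields $d_Y\mid 2^{40}p_i^8$.

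The centrality assertion is immediate: $\mathrm{Gal}(Y/\mathbb{Q}(\zeta_8,\sqrt{p_i}))$ equals the commutator subgroup $\{1\}\times[D_4,D_4]=\{1\}\times Z(D_4)$, of order $2$ and central. For the Frobenius criterion I would match each useful-pair condition to a constraint on $\mathrm{Frob}_{p_j}$: the condition $p_j\equiv 1\pmod 8$ is equivalent to $\mathrm{Frob}_{p_j}$ fixing $\zeta_8$; the condition $(p_i/p_j)=1$ is equivalent to fixing $\sqrt{p_i}$; and the step-$4$ condition that $e_i$ is a non-square modulo $p_j$, which by the parity argument in Lemma \ref{lem:algpart1} is consistent for both primes of $\mathbb{Q}(\sqrt{p_i})$ above $p_j$, is equivalent to $\mathrm{Frob}_{p_j}$ acting non-trivially on $\sqrt{e_i}$. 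The first two conditions place $\mathrm{Frob}_{p_j}$ in $\mathrm{Gal}(Y/\mathbb{Q}(\zeta_8,\sqrt{p_i}))$ and the third pins it to the non-trivial element; centrality means this is a single element rather than merely a conjugacy class. The main technical obstacle will be the $2$-adic conductor estimate, which depends on the sharper local bound for quadratic extensions of $2$-adic fields generated by a unit square root rather than a uniformizer square root.
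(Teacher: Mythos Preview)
Your outline is correct and follows the paper's structure closely; the Galois-theoretic set-up, the character enumeration, and the Frobenius translation are all handled as in the paper (with cosmetic variations, e.g.\ you identify $D_4$ by an explicit lift calculation whereas the paper argues structurally via the signature of $\mathbb{Q}(\sqrt{p_i},\sqrt{e_i})$).

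There are two places where your route genuinely diverges from the paper's. First, for oddness of $\rho$ you realize $\rho$ as $\mathrm{Ind}_{\mathbb{Q}(\sqrt{-p_i})}^{\mathbb{Q}}\psi$ and use that complex conjugation lies outside the index-two subgroup; the paper instead notes that complex conjugation is not in the center $\mathrm{Gal}(Y_1/\mathbb{Q}(\sqrt{-1},\sqrt{p_i}))$ of $D_4$, so its image under the faithful $2$-dimensional representation has eigenvalues $\pm 1$. Both are short and valid. Second, and more substantively, for the $2$-part of $\mathfrak f(\rho)$ the paper avoids your discriminant computation entirely: since $2$ splits in $\mathbb{Q}(\sqrt{p_i})$, the decomposition group at $2$ lies in the elementary abelian group $\mathrm{Gal}(Y/\mathbb{Q}(\sqrt{p_i}))$, so $\rho$ restricted there decomposes as a sum of two characters of $\mathbb{Q}_2^*$ of order at most $2$, each with conductor dividing $8$, whence $\mathfrak f(\rho)_2\mid 64$ directly. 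Your approach via $d_{L/K}\mid 4O_K$ for $L=K(\sqrt{u})$ with $u$ a unit (which is immediate from the order $O_K[\sqrt u]$ having discriminant $4u$) also works, but the paper's argument is cleaner and applies uniformly to $\chi_2\rho$ with no extra work. In your framework, ``the same estimate applies to $\chi_2\rho$'' is not quite automatic: $\chi_2\rho$ factors through $\mathbb{Q}(i,\sqrt{p_i},\sqrt{2e_i})$ rather than $Y_1$, and locally over $\mathbb{Q}_2(i)$ you are adjoining $\sqrt{2e_i}$; you should note that $2=-i(1+i)^2$ makes $2e_i$ a unit times a square in $\mathbb{Q}_2(i)$, so the unit bound still gives relative discriminant exponent $\le 4$ and the same back-calculation goes through.
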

 
 \begin{proof}    Since $\mathrm{Norm}_{\mathbb{Q}(\sqrt{p_i})/\mathbb{Q}}(e_i) = -1$,  the conjugates of $\sqrt{e_i}$ lie in $\{\pm \sqrt{e_i},\pm \sqrt{-1}/\sqrt{e_i}\}$.   It follows that $Y_1$ is Galois over $\mathbb{Q}$.  The extensions $\mathbb{Q}(\sqrt{e_i},\sqrt{p_i})$ and $\mathbb{Q}(\sqrt{-1},\sqrt{p_i})$ are quadratic and disjoint over $\mathbb{Q}(\sqrt{p_i})$ since they have different ramification at the infinite places of $\mathbb{Q}(\sqrt{p_i})$.  So $\mathrm{Gal}(Y_1/\mathbb{Q})$ has order $8$ and is non-abelian because $\mathbb{Q}(\sqrt{e_i},\sqrt{p_i})$ has two real places and one complex place.  The group 
 $\mathrm{Gal}(Y_1/\mathbb{Q}(\sqrt{p_i}))$ is a Klein four group, so $\mathrm{Gal}(Y_1/\mathbb{Q})$ must be dihedral of order $8$.  
 
 The maximal subfield of $Y_1$ that is abelian over $\mathbb{Q}$ is $\mathbb{Q}(\sqrt{-1},\sqrt{p_i})$ and this does not contain the subfield $\mathbb{Q}(\sqrt{2})$ of $\mathbb{Q}(\zeta_8) \subset Y$. Hence $Y$ is the compositum of the disjoint fields $\mathbb{Q}(\sqrt{2})$ and $Y_1$, and (\ref{eq:Yisom}) holds.  Because $Y$ is of degree $2$ over $\mathbb{Q}(\zeta_8,\sqrt{p_i})$ and $\mathbb{Q}(\zeta_8,\sqrt{p_i})$ is Galois over $\mathbb{Q}$, the group $\mathrm{Gal}(Y/\mathbb{Q}(\zeta_8,\sqrt{p_i}))$ is central of order two in $\mathrm{Gal}(Y/\mathbb{Q})$.
 
 The one dimensional  complex characters of $\mathrm{Gal}(Y/\mathbb{Q})$ are inflated from characters of $$\mathrm{Gal}(\mathbb{Q}(\zeta_8,\sqrt{p_i})/\mathbb{Q})\cong (\mathbb{Z}/2)^3$$
 and these have the indicated conductors.  Let $\rho$ be the inflation to $\mathrm{Gal}(Y/\mathbb{Q}) $ of the two-dimen\-sional irreducible representation of the dihedral group  $\mathrm{Gal}(Y_1/\mathbb{Q})$.  The center of $\mathrm{Gal}(Y_1/\mathbb{Q})$ is $\mathrm{Gal}(Y_1/\mathbb{Q}(\sqrt{-1},\sqrt{p_i}))$ and does not contain a complex conjugation.  Hence $\rho$ has eigenvalues $1$ and $-1$ on every complex conjugation, so $\rho$ is odd.  From (\ref{eq:Yisom}) we see that the other two-dimensional irreducible representation of $\mathrm{Gal}(Y/\mathbb{Q})$ is $\chi_2\cdot \rho$.

 The inertia groups of primes over $p_i$ in $\mathrm{Gal}(Y/\mathbb{Q})$ have order $2$ since $Y/\mathbb{Q}(\sqrt{p_i})$ is unramified over $p_i$.  These groups are not contained in the center $\mathrm{Gal}(\mathbb{Q}(\sqrt{2})/\mathbb{Q}) \times \mathrm{Gal}(Y_1/\mathbb{Q}(\sqrt{-1},\sqrt{p_i})) = \mathrm{Gal}(Y/\mathbb{Q}(\sqrt{-1},\sqrt{p_i}))$
 of $\mathrm{Gal}(Y/\mathbb{Q})$.  Hence the restriction of $\rho$ and $\rho \cdot \chi_2$ to an inertia group over $p_i$ is the sum of a trivial character and a quadratic non-trivial character, so $p_i$ exactly divides the conductors of these characters.  The inertia groups of primes over $2$ are contained in the group $\mathrm{Gal}(Y/\mathbb{Q}(\sqrt{p_i}))$, which is elementary abelian of order $8$. Since $2$ splits in $\mathbb{Q}(\sqrt{p_i})$, we find that the restriction of $\rho$ and $\rho \cdot \chi_2$ to an inertia group over $2$ is the sum of two irreducible one-dimensional characters that each have conductor dividing $8$. This leads to $\rho$ and $\rho \cdot \chi_2$ having conductor dividing $8^2 p_i$. These results on conductors show $d_Y$ divides $2^{40} p_i^8$ from the conductor discriminant formula.

 The conditions that $p_j \equiv 1$ mod $8$ and $\left ( {p_i} \atop {p_j}\right ) = 1 $ are equivalent to $p_j$ splitting in $\mathbb{Q}(\zeta_8,\sqrt{p_i})$.
 The conditions in step 4 of Construction \ref{con:doit} are equivalent to the primes over $p_j$ in
 $\mathbb{Q}(\zeta_8,\sqrt{p_i})$ not splitting in the quadratic extension $Y = \mathbb{Q}(\zeta_8,\sqrt{p_i},\sqrt{e_i})$ of $\mathbb{Q}(\zeta_8,\sqrt{p_i})$, and the Lemma is now clear from this.
 \end{proof}
  
\section{Chebotarev results from \cite{PTW} and \cite{KM}.}
\label{s:chebo}

We begin by recalling the  following conditional Chebotarev theorem proved by Pierce, Turnage-Butterbaugh and Wood in \cite[Theorem 3.1]{PTW}.

\begin{thm}
\label{thm:uncond}  Let $k$ be a fixed number field.  Fix $A \ge 2$, $0 < \delta \le 1/(2A)$ and an integer $n \ge 1$.
Let $G$ be a fixed transitive subroup of $S_n$.  Then there exist constants $D_0 \ge 1$, $\kappa_1, \kappa_2, \kappa_3 > 0$ such that the following holds.  Let $L/k$ be a Galois extension for which $\mathrm{Gal}(L/k)$ is isomorphic to $G$. Fix one such isomorphism.  Suppose $D_L \ge D_0$.  Suppose that the Artin $L$-function $\zeta_L(s)/\zeta_k(s)$ is zero-free in the region
\begin{equation}
\label{eq:freeregion}
[1 - \delta, 1] \times [-(\log D_L)^{2/\delta},(\log D_L)^{2/\delta}]
\end{equation}
in the complex plane.  Let $\mathcal{C} \subset G$ be a conjugacy class, and for $2 \le x \in \mathbb{R}$ let
$\pi_{\mathcal{C}}(x,L/k)$ be the number of finite places $\mathcal{P}$ of $k$ for which $\mathrm{Norm}_{k/\mathbb{Q}}(\mathcal{P}) \le x$, $\mathcal{P}$ is unramified in $L$ and the Frobenius conjugacy class of $\mathcal{P}$ in $G$ is $\mathcal{C}$.  Then for all 
\begin{equation}
\label{eq:lowerx1}
x \ge \kappa_1 \mathrm{exp}\{\kappa_2 (\log \log (D_L^{\kappa_3}))^2\}
\end{equation}
one has
\begin{equation}
\label{eq:Chebbound}
|\pi_{\mathcal{C}}(x,L/k) -  \frac{|\mathcal{C}|}{|G|} \mathrm{Li}(x)| \le  \frac{|\mathcal{C}|}{|G|}  \frac{x}{(\log x)^A}
\end{equation}
where $\mathrm{Li}(x)$ is the usual logarithmic integral
\begin{equation}
\label{eq:logint}
\mathrm{Li}(x) = \int_2^x \frac{dt}{\log t}. 
\end{equation}
If $k = \mathbb{Q}$, one can weaken the condition (\ref{eq:lowerx1}) to 
\begin{equation}
\label{eq:lowerx2}
x \ge \kappa_1 \mathrm{exp}\{\kappa_2 (\log \log (D_L^{\kappa_3}))^{5/3} (\log \log \log (D_L^2))^{1/3}\}.
\end{equation}
\end{thm}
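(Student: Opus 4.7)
\medskip

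\noindent\textbf{Proof plan for Theorem \ref{thm:uncond}.}  The plan is to follow the standard Lagarias--Odlyzko template for effective Chebotarev, but extract strength from the hypothesis that $\zeta_L(s)/\zeta_k(s)$ has no zeros in the rectangle \eqref{eq:freeregion}.  First I would use character orthogonality on $G=\mathrm{Gal}(L/k)$ to write the indicator of the conjugacy class $\mathcal C$ as
\begin{equation*}
\mathbf 1_{\mathcal C}(g)=\frac{|\mathcal C|}{|G|}\sum_{\chi\in\widehat G}\overline{\chi(\mathcal C)}\,\chi(g),
\end{equation*}
so that $\pi_{\mathcal C}(x,L/k)$ is expressed as a linear combination of the prime-counting functions $\pi(x,\chi)$ attached to the Artin $L$-functions $L(s,\chi,L/k)$.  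Peeling off the trivial character gives the main term $(|\mathcal C|/|G|)\,\mathrm{Li}(x)$; the task is to control the contribution of all non-trivial $\chi$, whose product is exactly $\zeta_L(s)/\zeta_k(s)$.

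Next I would invoke the explicit formula (in smoothed or truncated form) to write, for each non-trivial $\chi$,
\begin{equation*}
\psi(x,\chi)=-\sum_{|\mathrm{Im}(\rho)|\le T}\frac{x^\rho}{\rho}+O\!\Bigl(\tfrac{x(\log(x\,A(\chi)))^2}{T}\Bigr),
\end{equation*}
where the sum is over non-trivial zeros of $L(s,\chi)$ and $A(\chi)$ is the analytic conductor.  Summing over $\chi\neq 1$ converts this into a statement about zeros of $\zeta_L(s)/\zeta_k(s)$, which is precisely what the hypothesis controls.  I would then split the zero sum into (i) zeros with $|\mathrm{Im}(\rho)|\le (\log D_L)^{2/\delta}$, which by the zero-free region hypothesis all satisfy $\mathrm{Re}(\rho)<1-\delta$, and (ii) zeros with $|\mathrm{Im}(\rho)|>(\log D_L)^{2/\delta}$, handled by the standard upper bound $N(T,\zeta_L/\zeta_k)\ll T\log(D_L T^{[L:\mathbb Q]})$ that follows from Jensen's formula and the functional equation.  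Choosing the truncation height $T=(\log D_L)^{2/\delta}$, the tail contribution is $\ll x(\log D_L)^2/T$, which is absorbed into $x/(\log x)^A$ provided $\log x$ exceeds a suitable power of $\log\log D_L$, producing the lower bound \eqref{eq:lowerx1}.  The first set of zeros contributes at most $x^{1-\delta}\log(D_L T)^{[L:\mathbb Q]}$, again negligible against $x/(\log x)^A$ once $\delta\le 1/(2A)$.

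The main obstacle is making the arithmetic of exponents in \eqref{eq:lowerx1} correct: one has to reconcile the truncation error $x(\log D_L)^2/T$, the zero-free-region gain $x^{1-\delta}$, and the conductor $D_L$-dependence in $N(T)$, all while keeping the final lower bound on $x$ of the form $\exp(\kappa_2(\log\log D_L^{\kappa_3})^2)$.  This is essentially a delicate optimization of $T$ and $\delta$ against $A$; the $(\log\log)^2$ shape comes out precisely because $T$ is a power of $\log D_L$ and $\log x$ must dominate $\log\log D_L$ to a high power.  Keeping track of the dependence on $|G|$ is routine once $G$ is fixed, since then the number of characters and their degrees are bounded absolutely.

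Finally, for the strengthened conclusion \eqref{eq:lowerx2} when $k=\mathbb Q$, I would replace the trivial zero-counting bound by a \emph{log-free zero density estimate}, i.e.\ a bound of the form
\begin{equation*}
\#\{\rho:\mathrm{Re}(\rho)\ge 1-\sigma,\ |\mathrm{Im}(\rho)|\le T\}\ll (D_L T^{[L:\mathbb Q]})^{c\sigma},
\end{equation*}
available over $\mathbb Q$ through the large sieve / Kowalski--Michel-style machinery.  Inserting this in place of the $\log$-infested Jensen bound lets one take $T$ slightly smaller, and a direct computation produces the $(\log\log)^{5/3}(\log\log\log)^{1/3}$ exponent.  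Beyond this optimization the proof is identical to the general-$k$ case.
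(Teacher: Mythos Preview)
The paper does not prove this theorem at all: it is quoted verbatim from Pierce, Turnage-Butterbaugh and Wood \cite[Theorem 3.1]{PTW} and used as a black box. So there is no ``paper's own proof'' to compare against; your proposal is really a sketch of how \cite{PTW} establishes the result.

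As such a sketch, your overall architecture is correct and is indeed the Lagarias--Odlyzko template that \cite{PTW} follows: decompose via characters, apply the explicit formula, use the assumed zero-free rectangle for $\zeta_L/\zeta_k$ to push the non-trivial zero contributions below $x^{1-\delta}$, and optimize the truncation height against the zero-counting bound to produce the threshold \eqref{eq:lowerx1}.

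There is one genuine error in your explanation of the $k=\mathbb{Q}$ refinement \eqref{eq:lowerx2}. The $(\log\log)^{5/3}(\log\log\log)^{1/3}$ shape does not come from a log-free zero density estimate or from Kowalski--Michel; it is the signature of the Vinogradov--Korobov zero-free region for $\zeta_{\mathbb{Q}}(s)$, namely $\sigma>1-c(\log|t|)^{-2/3}(\log\log|t|)^{-1/3}$. The point is that after stripping off the non-trivial characters (controlled by the hypothesis on $\zeta_L/\zeta_k$), what remains is the prime number theorem for $k$; when $k=\mathbb{Q}$ one has the Vinogradov--Korobov error term, whereas for general $k$ only the classical de la Vall\'ee Poussin region is available. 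The Kowalski--Michel density result is used elsewhere in the paper (Theorem \ref{thm:bigCheb}) to verify that \emph{most} fields in a family satisfy the zero-free hypothesis, not to sharpen the Chebotarev bound for an individual field.
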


We will eventually need to know that most elements of a particular family of extensions $L/k$ satisfy the hypotheses of Theorem \ref{thm:uncond}.  For this we recall some results of Kowalski and  Michel in \cite{KM} as formulated in \cite[\S 5.1]{PTW}.

Suppose $m \ge 1$ is given.  Let $\rho$ be a cuspidal automorphic representation of $\mathrm{GL}(m)/\mathbb{Q}$. 
Suppose $\alpha \in [1/2,1]$ and $T \ge 0$.  Define the zero-counting function associated to the $L$-function $L(s,\rho)$ by
\begin{equation}
\label{eq:zerocounting}
N(\rho;\alpha,T) = \# \{s = \beta + i \gamma: \beta \ge \alpha, |\gamma| \le T, L(s,\rho) = 0\}
\end{equation}
where each zero is counted with multiplicity.

We will be interested in families of representations satisfying the following conditions:

\begin{condition}
\label{cond:listit}
For $X \ge 1$ let $S(X)$ be a finite (possibly empty) set of cuspidal automorphic representations $\rho$ of
$GL(m)/\mathbb{Q}$ such that the following properties hold for $(S(X))_{X \ge 1}$. 
\begin{enumerate}
\item[i.] Every $\rho \in S(X)$ satisfies the Ramanujan-Petersson conjecture at finite places.
\item[ii.]  There exists $A > 0$ and a constant $M_0$ such that for all $X \ge 1$ and all $\rho \in S(X)$, $\mathrm{Cond}(\rho) \le M_0 X^A$.
\item[iii.] There exists $d > 0$ and a constant $M_1$ such that for all $X \ge 1$,  $|S(X)| \le M_1 X^d$.
\item[iv.] For any $\epsilon > 0$ there is a constant $M_{2,\epsilon}$ such that all $\rho \in S(X)$ satisfy the convexity bound
$$|L(s,\rho)| \le M_{2,\epsilon}(\mathrm{Cond}(\rho)(|t| + 1)^m)^{(1 - \mathrm{Re}(s))/(2 + \epsilon)} \quad \mathrm{for}\quad 0 \le \mathrm{Re}(s) \le 1 \quad \mathrm{and} \quad t = \mathrm{Im}(s).$$
For any $\epsilon > 0$ there is a constant $M_{3,\epsilon}$ such that for all $\rho \not \cong \rho' \in S(X)$ we have  the convexity bound
$$|L(s,\rho\otimes \rho')| \le M_{3,\epsilon}(\mathrm{Cond}(\rho\otimes \rho')(|t| + 1)^{m^2})^{(1 - \mathrm{Re}(s))/(2 + \epsilon)} \quad \mathrm{for}\quad 0 \le \mathrm{Re}(s) \le 1 \quad \mathrm{and} \quad t = \mathrm{Im}(s).$$
\end{enumerate}
\end{condition}

\begin{example}
\label{ex:exone}
{\rm For each rational prime $p_i \equiv 1$ mod $8$ let $L_{p_i}$ be the degree $16$ extension $Y$ of $\mathbb{Q}$ associated to $p_i$ in Lemma \ref{lem:Frobenius}.  Let $S(X)$ be the union of the $7$ one-dimensional nontrivial characters of $\mathrm{Gal}(L_{p_i}/\mathbb{Q})$ as $p_i$ ranges over all primes $p_i \equiv 1 $ mod $8$ such that $p_i \le X$.  We can regard each such $\rho$ as a cuspidal automorphic representation of $\mathrm{GL}(m)$ when $m = 1$. The Ramanujan-Petersson conjecture holds for $\rho$ by \cite[p. 95]{IK}.   Since $\mathrm{Cond}(\rho)$ divides $8p_i$ we can let $M_0 = 8$ and $A $ be any constant such that $A \ge 1$ in part (ii) of Condition \ref{cond:listit}.  In part (iii) we can let $M_1 = 7$ and $d = 1$.  By \cite[Ex. 3, p. 100]{IK} there are constants $M_{2,\epsilon}$ and $M_{3,\epsilon}$ for which 
condition (iv) of Condition \ref{cond:listit} holds.}
\end{example}

\begin{example}
\label{ex:extwo}
{\rm With the notations of Example $\ref{ex:exone}$, there are two irreducible odd dihedral Galois representations $\rho$
of $\mathrm{Gal}(L_{p_i}/\mathbb{Q})$.  These can be considered to be  cuspidal automorphic representations of $\mathrm{GL}(m)$ when $m = 2$ that correspond to holomorphic automorphic forms of weight $1$.  By work of Deligne and of Deligne and Serre (see \cite[p. 131]{IK}) they satisfy the Ramanujan-Petersson conjecture.  Let $S(X)$ be the union of the two two-dimensional irreducible characters of $\mathrm{Gal}(L_{p_i}/\mathbb{Q})$ as $p_i$ ranges over all primes $p_i \equiv 1 $ mod $8$ such that $p_i \le X$.  Since the conductors of these characters for $L_{p_i}$ divide $64 p_i$ by Lemma \ref{lem:Frobenius}, we can let $M_0 = 64$ and $A$ be any constant with $A \ge 1$ in part (ii) of Condition \ref{cond:listit}.   In part (iii) we can let $M_1 = 2$ and $d = 1$.   By \cite[Ex. 3, p. 100]{IK} there are constants $M_{2,\epsilon}$ and $M_{3,\epsilon}$ for which 
condition (iv) of Condition \ref{cond:listit} holds.} 
\end{example}

We now have the following result of Kowalski and Michel (see \cite[Theorem 2]{KM} and \cite[Theorem E]{PTW}).

\begin{thm}
\label{thm:bigCheb}  Let $(S(X))_{X \ge 1}$ be a family of cuspidal automorphic representations of $\mathrm{GL}(m)/\mathbb{Q}$ satisfying Condition \ref{cond:listit}.  Suppose $3/4 \le \alpha \le 1$ and $T \ge 2$.  There exists a constant $c'_0 = c'_0(m,A,d)$, which can be taken to be
\begin{equation}
\label{eq:c0prime}
c'_0 = \frac{5mA}{2} + d,
\end{equation}
and a constant $B \ge 0$ depending only on the parameters $m, A, d, M_0, M_1, M_{2,\epsilon}, M_{3,\epsilon}$ for which the following is true.  For every choice of a constant $c_0 > c'_0$ there is a constant $M_{4,c_0}$ depending only on $c_0$ such that for all $X \ge 1$, one has
\begin{equation}
\label{eq:bigupper}
\sum_{\rho \in S(X)} N(\rho;\alpha, T) \le M_{4,c_0} T^B X^{c_0(1-\alpha)/(2 \alpha - 1)}.
\end{equation}
\end{thm}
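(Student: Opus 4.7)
The plan is to follow the Kowalski--Michel strategy via zero detection combined with a large-sieve/Rankin--Selberg mean value estimate for the family, where the four parts of Condition \ref{cond:listit} control the final exponents. Alternatively, since the statement is essentially Theorem 2 of \cite{KM} (equivalently Theorem E of \cite{PTW}), and our Condition \ref{cond:listit} is tailored precisely to their hypotheses, one can simply quote that result; what follows is the approach whose details are in those two references.

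First I would set up the zero detection. For each $\rho \in S(X)$ and each putative zero $s_0 = \beta_0 + i\gamma_0$ of $L(s,\rho)$ with $\beta_0 \ge \alpha$ and $|\gamma_0| \le T$, attach a Dirichlet polynomial $D_\rho(s_0)$ built from the coefficients of $L(s,\rho)$ multiplied by a truncated mollifier $M(s,\rho) = \sum_{n \le Y} \mu_\rho(n) n^{-s}$ formed from the inverse Dirichlet series. A standard contour-shift (or Perron-type) argument, using the convexity bound of Condition \ref{cond:listit}(iv) to control the shifted integral on $\mathrm{Re}(s) = 1-\alpha$, shows that at any such zero one has $|D_\rho(s_0)| \gg 1$ for an appropriate choice of length parameters $Y, Z$ with $\log Y, \log Z \asymp \log(\mathrm{Cond}(\rho) T)$. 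After a dyadic decomposition in $n$, it suffices to bound, for each dyadic length $N$, the number of pairs $(\rho, s_0)$ at which a polynomial $\sum_{N < n \le 2N} a_\rho(n) n^{-s_0}$ exceeds a threshold of size roughly $N^{1-\alpha}$.

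Next I would apply a mean-value/duality estimate. Raising the detection inequality to a $2k$-th power and summing over a well-spaced set of zeros and over $\rho \in S(X)$ produces, by Mellin inversion, an expression controlled by
\begin{equation*}
\sum_{\rho,\rho' \in S(X)} \int_{|t|\le T} \bigl| L(\sigma+it,\,\rho \otimes \overline{\rho'}) \bigr|^{k}\, dt
\end{equation*}
for a suitable $\sigma$ close to $1$. The Ramanujan--Petersson hypothesis \ref{cond:listit}(i) supplies the divisor-type bound on the Dirichlet coefficients that keeps the error terms under control. The off-diagonal contribution $\rho \not\cong \rho'$ is bounded by the Rankin--Selberg convexity estimate of \ref{cond:listit}(iv), combined with $\mathrm{Cond}(\rho \otimes \overline{\rho'}) \le (\mathrm{Cond}(\rho)\mathrm{Cond}(\rho'))^m \le M_0^{2m} X^{2mA}$. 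The diagonal $\rho = \rho'$, where $L(s,\rho\otimes\overline\rho)$ has a simple pole at $s=1$, gives a contribution essentially proportional to $|S(X)| \le M_1 X^d$.

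Finally I would optimize the parameters---mollifier length $Y$, truncation length $Z$, moment $k$, and zero spacing---so that the total count of zeros summed over the family takes the form $T^B X^{c_0(1-\alpha)/(2\alpha-1)}$. The explicit value $c_0' = 5mA/2 + d$ appears because the quadratic conductor growth $X^{2mA}$ of the Rankin--Selberg factor is weighted by the Halász--Montgomery scaling $1/(2\alpha-1)$, while the family cardinality exponent $d$ enters additively through the diagonal. The main obstacle is exactly this bookkeeping: one must choose the parameters so that all the exponents combine cleanly and $c_0$ can be pushed arbitrarily close to $5mA/2 + d$, at the cost of a larger implicit constant $M_{4,c_0}$. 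Since every step and every input is already carried out in \cite{KM} under hypotheses matching our Condition \ref{cond:listit}, the most economical route in our setting is to quote \cite[Thm.~2]{KM} (or \cite[Thm.~E]{PTW}) directly.
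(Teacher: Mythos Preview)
Your proposal is correct and aligns with the paper's treatment: the paper does not prove this theorem at all but simply records it as \cite[Theorem~2]{KM} (equivalently \cite[Theorem~E]{PTW}), exactly as you suggest in your final sentence. Your sketch of the underlying zero-detection and large-sieve argument is accurate extra detail, but for the purposes of this paper the direct citation suffices.
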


\begin{cor}
\label{cor:finally}
Suppose $S(X)$ is one of the two families described in Examples \ref{ex:exone} and \ref{ex:extwo}.  Set $A = 2$ in Theorem \ref{thm:uncond}.  For $p_i \equiv 1$ mod $8$ and $p_i \le X$, the group $G = \mathrm{Gal}(L_{p_i}/\mathbb{Q}) \cong \mathbb{Z}/2 \times D_8$ is a transitive subgroup of the symmetric group $S_n$ on $n = 16$ letters by means of its left action on itself.  Suppose $0 < \delta \le 1/(2A) = 1/4$ and that $\delta$ is sufficiently close to $0$.  Then there is a constant $0 \le \tau < 1$ such that for all sufficiently large $X$, the following is true.  The number of elements $\rho$ of $S(X)$ such that $L(s,\rho)$ has a zero in the region
\begin{equation}
\label{eq:nasty}
[1 - \delta, 1] \times [-(\log (2^{40} X^8))^{2/\delta},(\log (2^{40} X^8))^{2/\delta}]
\end{equation} is bounded  by $X^{\tau}$.  
\end{cor}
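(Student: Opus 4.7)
The plan is to apply Theorem \ref{thm:bigCheb} with $\alpha = 1-\delta$ and $T = (\log(2^{40}X^8))^{2/\delta}$ to the two families from Examples \ref{ex:exone} and \ref{ex:extwo}, and then observe that the number of $\rho \in S(X)$ possessing a zero in the specified rectangle is no larger than the total zero count $\sum_{\rho \in S(X)} N(\rho;\alpha,T)$ on the left-hand side of (\ref{eq:bigupper}).

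First I would verify the hypotheses of Theorem \ref{thm:bigCheb}: Examples \ref{ex:exone} and \ref{ex:extwo} already check Condition \ref{cond:listit} (i)--(iv), with $m=1$, $A=1$, $d=1$ in the first case and $m=2$, $A=1$, $d=1$ in the second. The formula (\ref{eq:c0prime}) then yields the admissible thresholds $c'_0 = 5/2+1 = 7/2$ for $m=1$ and $c'_0 = 5+1 = 6$ for $m=2$. Choose any $c_0$ slightly larger than the maximum of these (say $c_0 = 7$), and apply Theorem \ref{thm:bigCheb} with $\alpha = 1-\delta \in [3/4,1]$, which is valid provided $\delta \le 1/4$. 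The resulting bound is
\begin{equation*}
\sum_{\rho \in S(X)} N(\rho;1-\delta, T) \;\le\; M_{4,c_0}\, T^B\, X^{c_0\delta/(1-2\delta)}.
\end{equation*}

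Next I would unpack the two factors on the right. Since $T = (\log(2^{40}X^8))^{2/\delta}$, we have $T^B = O((\log X)^{2B/\delta}) = X^{o(1)}$ as $X \to \infty$ (with $\delta$ and $B$ fixed). For the power of $X$, choose $\delta$ small enough that $c_0\delta/(1-2\delta) < 1$, which amounts to $\delta < 1/(c_0+2)$; this is consistent with $\delta \le 1/(2A) = 1/4$ since $1/(c_0+2) < 1/4$. Then fix some
\begin{equation*}
\tau \;\in\; \Bigl(\tfrac{c_0\delta}{1-2\delta},\, 1\Bigr),
\end{equation*}
and for all sufficiently large $X$ the polylog factor $M_{4,c_0}T^B$ is absorbed into the difference of exponents, giving
\begin{equation*}
\sum_{\rho \in S(X)} N(\rho;1-\delta,T) \;\le\; X^{\tau}.
\end{equation*}

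Finally, the number of $\rho \in S(X)$ with at least one zero in the rectangle (\ref{eq:nasty}) is bounded above by this same sum (each such $\rho$ contributes at least one to the zero count), so this count is $\le X^{\tau}$. Doing this argument once for each of the two families and taking the larger of the two exponents $\tau$ (and correspondingly smaller of the two permissible $\delta$'s) yields the uniform conclusion. There is no real obstacle beyond arithmetic bookkeeping; the only point that requires mild care is making sure the interval $(c_0\delta/(1-2\delta),1)$ is non-empty, which is precisely the content of the hypothesis that $\delta$ is sufficiently close to $0$.
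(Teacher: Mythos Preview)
Your proof is correct and follows essentially the same approach as the paper: apply Theorem~\ref{thm:bigCheb} with $\alpha = 1-\delta$ and $T = (\log(2^{40}X^8))^{2/\delta}$, then take $\delta$ small enough that the exponent $c_0\delta/(1-2\delta)$ is strictly less than $1$, absorbing the polylogarithmic factor $T^B$ into the gap. The only cosmetic difference is that the paper uses $A = 2$ in computing $c'_0$ (matching the $A$ set in Theorem~\ref{thm:uncond}, even though the Condition~\ref{cond:listit} parameter could be taken as low as $1$ per the Examples), arriving at $c'_0 \le 11$ and choosing $c_0 = 12$, whereas you take $A = 1$ and $c_0 = 7$; both choices are valid and lead to the same conclusion.
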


\begin{proof}  We set $\alpha = 1 - \delta$ and let $T = (\log(2^{40} X^8))^{2/\delta}$.  The constant $c'_0$
in (\ref{eq:c0prime}) is now bounded above by $11$ since $A = 2$, $m \le 2$ and $d = 1$ in Examples \ref{ex:exone} and \ref{ex:extwo}. Choose $c_0 = 12 > c'_0$.  Theorem \ref{thm:bigCheb} shows that there is a bound of the form
\begin{equation}
\label{eq:urk}
\sum_{\rho \in S(X)} N(\rho;\alpha, T) \le M_{4,c_0} T^B X^{c_0(1-\alpha)/(2 \alpha - 1)}
\end{equation}
for some constants $B$ and $M_{4,c_0}$ depending on the parameters associated to the family $(S(X))_{X \ge 1}$
in Examples \ref{ex:exone} and \ref{ex:extwo}.  Since $c_0 = 12 $ is fixed, by taking $\delta \in (0,1/4)$ sufficiently close to $0$, we can ensure that
$$c_0(1-\alpha)/(2 \alpha - 1)  = c_0 \delta/(1 - 2 \delta) < 1.$$
The bound (\ref{eq:urk}) now shows that there is a $\tau < 1$ such that for all sufficiently large $X$ one has
\begin{equation}
\label{eq:aboveit}
\sum_{\rho \in S(X)} N(\rho;\alpha, T) < X^{\tau}.
\end{equation}
This implies that there are at most $X^{\tau}$ elements $\rho \in S(X)$ which could have a zero in the 
region (\ref{eq:nasty}).  
\end{proof}

\begin{cor}
\label{cor:afterlife} With the notations of Corollary \ref{cor:finally}, let $k = \mathbb{Q}$ and suppose $X$ is sufficiently large.  For all but $9 X^{\tau}$ primes $p_i \equiv 1$ mod $8$ for which $p_i \le X$, the field $L_{p_i}$ will satisfy 
\begin{equation}
\label{eq:Chebbound2}
|\pi_{\mathcal{C}}(X,L_{p_i}/\mathbb{Q}) -  \frac{|\mathcal{C}|}{|G|} \mathrm{Li}(X)| \le  \frac{|\mathcal{C}|}{|G|}  \frac{X}{(\log X)^2}
\end{equation}
for all conjugacy classes $\mathcal{C}$ in $G = \mathrm{Gal}(L_{p_i}/\mathbb{Q})$.
\end{cor}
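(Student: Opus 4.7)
The plan is to apply Theorem \ref{thm:uncond} to each $L = L_{p_i}$ with $k = \mathbb{Q}$, $n = 16$, $G = \mathrm{Gal}(L_{p_i}/\mathbb{Q}) \cong \mathbb{Z}/2 \times D_8$, and the parameters $A = 2$ and $\delta$ fixed as in Corollary \ref{cor:finally}, and then to use Corollary \ref{cor:finally} to bound the number of primes $p_i$ for which the zero-free-region hypothesis of Theorem \ref{thm:uncond} can fail. The target Chebotarev estimate (\ref{eq:Chebbound2}) is exactly the conclusion (\ref{eq:Chebbound}) of Theorem \ref{thm:uncond} at $x = X$, so the work reduces to (i) checking that $x = X$ is in the allowed range, and (ii) counting the exceptional primes.

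For (i), Lemma \ref{lem:Frobenius} gives $D_{L_{p_i}}$ dividing $2^{40} p_i^8 \le 2^{40} X^8$, so for sufficiently large $X$ one has $D_{L_{p_i}} \ge D_0$ and the right-hand side of (\ref{eq:lowerx2}) is of the order $\exp\{O((\log\log X)^{5/3}(\log\log\log X)^{1/3})\}$, which is eventually well below $X$; thus $x = X$ is permissible. For (ii), I would use the Artin factorisation
\begin{equation*}
\zeta_{L_{p_i}}(s)/\zeta_{\mathbb{Q}}(s) = \prod_{\chi \ne 1} L(s,\chi)^{\dim \chi},
\end{equation*}
where $\chi$ ranges over the nontrivial irreducible characters of $G$. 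By Lemma \ref{lem:Frobenius}, these consist of the seven nontrivial one-dimensional characters together with the two odd two-dimensional dihedral characters $\rho$ and $\rho \cdot \chi_2$, nine in total. The zero-free rectangle (\ref{eq:freeregion}) for $L_{p_i}$ is contained in the rectangle (\ref{eq:nasty}) because $\log D_{L_{p_i}} \le \log(2^{40} X^8)$, so the hypothesis of Theorem \ref{thm:uncond} holds for $L_{p_i}$ as soon as none of the nine associated L-functions has a zero in (\ref{eq:nasty}).

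Finally, I would split each of the two families in Examples \ref{ex:exone} and \ref{ex:extwo} into its individual character-components (seven and two, respectively); Condition \ref{cond:listit} remains satisfied with the same constants for each component, and hence Corollary \ref{cor:finally} bounds the number of $p_i \le X$ for which that specific component has a zero in (\ref{eq:nasty}) by $X^{\tau}$. Taking a union bound over the nine components yields at most $9 X^{\tau}$ exceptional primes. For every other $p_i$, all hypotheses of Theorem \ref{thm:uncond} are met and (\ref{eq:Chebbound2}) follows directly. The main technical point is the verification in (i) that $x = X$ is large enough for (\ref{eq:lowerx2}); everything else is bookkeeping involving the factorisation and a union bound.
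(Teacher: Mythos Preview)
Your proposal is correct and follows essentially the same route as the paper's proof: factor $\zeta_{L_{p_i}}(s)/\zeta_{\mathbb{Q}}(s)$ into the nine nontrivial Artin $L$-functions, use Corollary~\ref{cor:finally} to bound the number of $p_i$ for which any of these has a zero in~(\ref{eq:nasty}) by $9X^\tau$, observe that~(\ref{eq:freeregion}) $\subset$~(\ref{eq:nasty}) since $D_{L_{p_i}}\le 2^{40}X^8$, and check that $x=X$ lies in the admissible range so Theorem~\ref{thm:uncond} applies. The only cosmetic differences are that the paper cites~(\ref{eq:lowerx1}) rather than the weaker~(\ref{eq:lowerx2}) (either works), and that you make the nine-fold union bound explicit by splitting the families of Examples~\ref{ex:exone} and~\ref{ex:extwo} into their character components, whereas the paper arrives at the same $9X^\tau$ more tersely.
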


\begin{proof}  We can factor the quotient $\zeta_{L_{p_i}}(s)/\zeta_{\mathbb{Q}}(s)$ into a product of powers of the $L$-functions of the irreducible non-trivial complex representations of $\mathrm{Gal}(L_{p_i}/\mathbb{Q})$.  There are 
$7$ one dimensional representations and $2$ two-dimensional representations of this kind.  Thus Corollary \ref{cor:finally} implies that for all but $9 X^\tau$ primes $p_i \equiv 1$ mod $8$ with $p_i \le X$, there will be no zeros of $\zeta_{L_{p_i}}(s)/\zeta_{\mathbb{Q}}(s)$ in the region (\ref{eq:nasty}).  By Lemma \ref{lem:Frobenius}, one has 
$D_{L_{p_i}} \le 2^{40} p_i^8 \le 2^{40} X^8$, so all but $9 X^\tau$ of the $L_{p_i}$ will satisfy the hypothesis of 
Theorem \ref{thm:uncond}.  The bound $D_{L_{p_i}} \le 2^{40} p_i^8 \le 2^{40} X^8$ also shows that for sufficiently large $X$, (\ref{eq:lowerx1}) will hold when $x = X$.  Hence we have the conclusion of the corollary from Theorem \ref{thm:uncond}.
\end{proof}

\section{End of the proof of Theorem \ref{thm:main}}
\label{s:finish}

We first need to show that there is an  upper bound of the form 
\begin{equation}
\label{eq:Chebbound3}
|\pi_{\mathcal{C}_0}(X,\mathbb{Q}(\zeta_8)/\mathbb{Q}) -  \frac{|\mathcal{C}_0|}{|G|} \mathrm{Li}(X)| \le f(X)
\end{equation}
where $\mathcal{C}_0$ is the conjugacy class of the identity element of $\mathrm{Gal}(\mathbb{Q}(\zeta_8)/\mathbb{Q})$ and $f(X)\ge 0$ is an effectively computable function for which 
$$\lim_{X \to \infty} \frac{|f(X)|}{\mathrm{Li}(X)} = 0.$$
One could use Theorem \ref{thm:uncond} to show this at the cost of having to verify that there is a certain explicit zero free region for $\zeta_{\mathbb{Q}(\zeta_8)}(s)/\zeta_{\mathbb{Q}}(s)$.   However, it is simpler to use the following two results off the shelf.  One can deduce the required bound (\ref{eq:Chebbound3}) by combining Lagarias and Odlyzko's unconditional effective Chebotarev theorem in \cite[Theorem 1.3]{LO} with  Stark's upper bound in
\cite[p. 148]{Stark} on any possible Siegel zero of $\zeta_{\mathbb{Q}(\zeta_8)}(s)$.  For more on this, see \cite[p. 413  - 415]{LO}.

We conclude from (\ref{eq:Chebbound3}) that  the number of primes $p_i \equiv 1$ mod $8$ such that $p_i \le X$ is at least 
$$\frac{1}{4} \mathrm{Li}(X) - f(X).$$
 For each such $p_i$, we apply Lemma \ref{lem:Frobenius} with $\mathcal{C}$ the non-trivial element of $\mathrm{Gal}(L_{p_i}/\mathbb{Q}(\zeta_8,\sqrt{p_i}))$.  This leads to the conclusion that  the number of $p_j \equiv 1$ mod $8$ with $p_i \ne p_j \le X$  and $(p_i,p_j) \in S$ is $\pi_{\mathcal{C}}(X,L_{p_i}/\mathbb{Q})$.  Corollary \ref{cor:afterlife}
now shows that there is a $\tau < 1$ such that for sufficiently large $X$ one has
$$ \pi_{\mathcal{C}}(X,L_{p_i}/\mathbb{Q}) \ge \frac{1}{16} \mathrm{Li}(X) -  \frac{X}{(\log X)^2}$$
for all but $9X^\tau$ of the primes $p_i \equiv 1$ mod $8$ such that $p_i \le X$.  This shows that
for sufficiently large $X$, 
$$\# S \ge \left ( \frac{1}{4} \mathrm{Li}(X) - f(X) - 9 X^\tau  \right )\cdot \left ( \frac{1}{16} \mathrm{Li}(X) -  \frac{X}{(\log X)^2} \right ).$$
This proves that for all $0 < c' < 1/128$ one has 
\begin{equation}
\label{eq:lowerbo}
\#S/2 \ge c' \frac{X^2}{(\log X)^2}
\end{equation}
for all sufficiently large $X$. This inequality and Corollary \ref{cor:niceending} complete the proof.

\section{A connection with R\'edei symbols}
\label{s:redei}

In this section we show a connection between R\'edei symbols and steps 1 through 4 of Construction \ref{con:doit}.  

\begin{lemma}
\label{lem:twoprime} Suppose $L = \mathbb{Q}(\sqrt{p_1},\sqrt{p_2})$ for distinct primes $p_1 \equiv p_2 \equiv 1$ mod $4$.  Let $N(L)$ be the maximal elementary abelian two-extension of $L$ that is unramified outside of infinty.
\begin{enumerate}
\item[(a)] If the quadratic residue symbol $\left ( {p_1} \atop {p_2}\right ) = \left ( {p_2} \atop {p_1}\right ) $ equals $ -1$ then $N(L) = L$.
\item[(b)] Suppose the quadratic residue symbol $\left ( {p_1} \atop {p_2}\right )  = \left ( {p_2} \atop {p_1}\right ) $ equals $1$.  Then $N(L)$ is a quadratic extension of $L$.  The following additional conditions are equivalent in this case:
\vspace{1ex}
\begin{enumerate}
\item[(b1)] The class number of $L$ is odd.
\vspace{1ex}
\item[(b2)] The  quartic residue symbols $\left ( {p_1} \atop {p_2}\right )_4$ and $\left ( {p_2} \atop {p_1}\right )_4 $ satisfy
$\left ( {p_1} \atop {p_2}\right )_4 \cdot \left ( {p_2} \atop {p_1}\right )_4  = -1 $.  
\vspace{1ex}
\item[(b3)] The R\'edei symbol $[p_1,p_2,-1]$ equals $-1$.
\vspace{1ex}
\item[(b4)] $N(L)$ is totally complex.
\end{enumerate}
\vspace{1ex}
Under any of these conditions, $N(L) = L(\sqrt{w})$ for a unit $w \in O_L^*$.   
\end{enumerate}
If $p_1 \equiv p_2 \equiv 1$ mod $8$ and $p_1, p_2 \le X$, then any one of conditions (b1) - (b4) is equivalent to $(p_1,p_2)$ being an
element of the set $S$ of Construction \ref{con:doit}.
\end{lemma} 

\begin{proof} By genus theory, $\mathbb{Q}(\sqrt{p_1})$ has odd narrow class number, and if 
$\left ( {p_1} \atop {p_2}\right ) = -1$ then $L$ is the maximal abelian pro-$2$ extension of $\mathbb{Q}(\sqrt{p_1})$ that 
is unramified outside of infinity and $p_2$ and at most quadratically ramified over the unique prime of $\mathbb{Q}(\sqrt{p_1})$ over $p_2$.  It follows that $N(L) = L$. 

 Suppose now that
$\left ( {p_1} \atop {p_2}\right )  = 1$.  Then by considering the ray class group of $\mathbb{Q}(\sqrt{p_1})$ 
of conductor $(p_2)$ times the infinite places of $\mathbb{Q}(\sqrt{p_1})$ we see that there is a degree four extension of this field that is at most quadratically ramified over $p_2$ and unramified outside of $p_2$ and infinity.  This forces
$N(L)$ to be larger than $L$.  Since the two-rank of the narrow class group of $L$ is bounded above by $1$
by \cite[Theorem 1.1]{KP}, we conclude that this rank is exactly $1$ and $[N(L):L] = 2$.  

The equivalence of (b1) and (b2) is proved by Fr\"ohlich in \cite[Theorem 5.7]{Froh}.  A complex conjugation has order $1$ or $2$ in the (cyclic, non-trivial) Sylow two-subgroup $H$ of the narrow ideal class group of $L$.  Since $\mathrm{Gal}(N(L)/L)$ is the unique order $2$ quotient of $H$, we see that $N(L)$ is totally complex if and only if $H$ has order exactly $2$ and the class number of $L$ is odd.  On the other hand, if the class number of $L$ is odd, then $H$ has exponent $2$, so it must have order exactly two since it is cyclic and non-trivial.  Finally, the R\'edei symbol $[p_1,p_2,-1]$ equals $-1$ if and only if the extension $N(L)/L$ is ramified at infinity \cite[Definition 7.8]{Stevenhagen}.  These arguments imply that (b1) - (b4) are equivalent in case (b).  Finally, since $N(L)$ is quadratic over $L$ in case (b), if $L$ has odd class number then $N(L)= L(\sqrt{w})$ for some unit $w \in O_L^*$.

For the final statement of Lemma \ref{lem:twoprime}, note that if $(p_1,p_2) \in S$ then $L$ has odd class number by Lemma
\ref{lem:algpart1}, so that condition (b1) holds.  Conversely, suppose $p_1, p_2 \le X$ and that (b1) holds.  To show $(p_1,p_2) 
\in S$, it will suffice to show that the condition in step 4 of Construction \ref{con:doit} holds. However, the argument used in Lemma \ref{lem:algpart1}   to show that $(p_j,p_i) \in S$ if $(p_i,p_j) \in S$ shows that if the condition in step 4 fails then $L$ has even class number.  Hence (b1)
implies $(p_1,p_2) \in S$.
\end{proof}

\begin{rem}
\label{rem:stats}   Because of Lemma \ref{lem:twoprime}, our main result on the size of $\#S$ can be reformulated in the following way.
Define functions $a_1, a_2: \mathbb{Z} \to \{0,1\}$ by $a_1(d_1) = a_2(d_1) = 1$ if $d_1$ is a  prime congruent to $1$ mod $8$
and let $a_1(d_1) = a_2(d_1) = 0$ otherwise.  Let $T$ be the set of all pairs $(p_1,p_2)$ of primes congruent to $1$ mod $8$ for which $p_1 \le X$ and $p_2 \le X$.  One sees from \cite[Definition 7.1]{Stevenhagen} that if $(p,p) \in T$ then $[p,p,-1] = 1$.  It follows that
 \begin{eqnarray}
\label{eq:oops}
\sum_{|d_1| < X+1, |d_2| < X+1} a_1(d_1) a_2(d_2) [d_1,d_2,-1]  =  \# T  - 2\cdot \# S \le \left (\frac{1}{16}- 4c' \right ) \left ( \frac{X}{ \mathrm{log}(X)} \right )^2 (1 + o(1))
\end{eqnarray} 
for the positive constant $c'$ appearing in (\ref{eq:lowerbo}), where
$$\# T  =   \frac{1}{16} \left ( \frac{X}{\mathrm{log}(X)}\right  )^2 \cdot (1 + o(1))$$
because of the Chebotarev density theorem.  

We have formulated (\ref{eq:oops}) in a somewhat complicated way in order to compare it to the  trilinear character bound of Koymans and Smith in \cite[Theorem 1.10]{KS}.  Choose three functions $a_{i,j}:\mathbb{Z} \times \mathbb{Z} \to \mathbb{C}$ whose values have absolute value bounded by $1$.  Choose numbers $H_1, H_2, H_3 \ge 3$.  Then Koymans and Smith prove
\begin{eqnarray}
\label{eq:KStorm}
&\displaystyle \left | \ \ \sum_{|d_1| < H_1, |d_2| < H_2, |d_3| < H_3 } a_{1,2}(d_1,d_2)\cdot a_{1, 3}(d_1, d_3) \cdot a_{2,3}(d_2,d_3)\cdot [d_1,d_2,d_3] \ \ \right | & \\
\nonumber
&\displaystyle \ll H_1 H_2 H_3 \cdot \mathrm{log}(H_1 H_2 H_3)^{1792} \cdot \left(H_1^{-1/512} + H_2^{-1/512} + H_3^{-1/512}\right).&
\end{eqnarray}

 The trilinear character bound in (\ref{eq:KStorm}) 
shows that the sum of the R\'edei symbols over certain triples is bounded by the total number of triples times a function that goes to $0$
as long as all of $H_1$, $H_2$ and $H_3$ become large.  It is because of this last requirement that we do not see how to deduce (\ref{eq:oops}) from the trilinear character bound by making $H_3$ small (e.g. by letting $H_3 = 3$).  The right hand side of (\ref{eq:KStorm}) does not then represent a savings over $4 H_1 H_2$ once $H_1$ or $H_2$ are large, and the sum in (\ref{eq:oops}) is over pairs of integers $d_1$ and $d_2$.
In fact, we should not be able to improve the constant $(1/16-4c')$ on the right side of (\ref{eq:oops}) to a function going to $0$ because the Chebotarev results of the previous section show also that there is a positive proportion of pairs of primes congruent to $1$ mod $8$ for which the conditions in Construction \ref{con:doit} fail. 

This raises the question of finding upper bounds on sums of the form
$$\left |\ \  \sum_{|d_1| < H_1, |d_2| < H_2 } A_{1}(d_1) \cdot A_{2}(d_2) \cdot [d_1,d_2,d_3]\ \  \right |$$
when $A_1$ and $A_2$ are arbitrary functions from $\mathbb{Z}$ to  $\mathbb{C}$ whose values are bounded by $1$ and when $d_3$ is a fixed integer.  It would be interesting if a combination of the modular form methods in this paper and the methods of Koymans and Smith in \cite{KS} would give non-trivial bilinear character bounds on such sums.
\end{rem}

 \section{An Example}
 \label{s:example} 
 We will use the notations in (\ref{eq:nudef}) and (\ref{eq:Cdef}).  The numbers $\nu_{i,j}$ lie in 
 $F = \mathbb{Q}(\sqrt{7},\sqrt{13}) $ and are the four conjugates over $\mathbb{Q}$ of 
 $$\nu_{1,1} = -13 - 5 \sqrt{7} + 8 \sqrt{13} + 3 \sqrt{7} \cdot \sqrt{13}.$$
 The norm of $\nu_{1,1}$ to $\mathbb{Q}$ is $-3$.  So $\nu_{1,1}$ generates one of the four split primes over $3$ in
 $F$.  We defined 
 $N'$ to be the field obtained by adjoining to $\mathbb{Q}$ all square roots of elements of  
  $$C = \left\{7,13,-1, \frac{3 + \sqrt{13}}{2}\right\} \cup \{\nu_{i,j}\}_{1 \le i, j \le 2}.$$
  Here $\frac{3 + \sqrt{13}}{2}$ is a fundamental unit of $\mathbb{Q}(\sqrt{13})$, and $\pm \frac{3 + \sqrt{13}}{2}$ and $-1$ are not  squares in $F$.  Since the $\nu_{i,j}$ generate ideals of the integers $O_F$ of $F$ that are independent modulo squares of ideals, we conclude that the elements of $C - \{ 7, 13\}$ generate a subgroup of $F^*/(F^*)^2$ isomorphic to $(\mathbb{Z}/2)^6$.  So by Kummer theory, $N'$ is Galois over $\mathbb{Q}$, $N'$ contains $F$ and $\mathrm{Gal}(N'/F)$ is isomorphic to $(\mathbb{Z}/2)^6$.  Thus $[N':\mathbb{Q}] = 256$.  Since $\frac{3  + \sqrt{13}}{2}$ is a unit, $N'/F$ is unramified outside $2$, $3$ and infinity, and $N'/F$ is tamely and quadratically ramified at each of the four primes over $3$ in $F$. 
 
 There is a unique prime $\mathfrak{P}$ over $2$ in $F$, and $\mathfrak{P}$ is quadratically ramified with residue field of order $4$.  We have $\mathfrak{P} = (3 + \sqrt{7}) O_F$.  We can write
$$1 - \nu_{1,1}  =  14 +  \sqrt{7} \cdot 2 \cdot \left(\frac{5 - 3 \sqrt{13}}{2} \right) - 8 \sqrt{13}.$$ 
This proves $1 - \nu_{1,1} \equiv 0 $ mod $2O_F$, where $2O_F = \mathfrak{P}^2$.  Thus
$$\beta = \frac{1 - \nu_{1,1}}{(3 + \sqrt{7})^2}  \in O_F.$$
Consider the element 
$$\alpha = \frac{1  + \sqrt{\nu_{1,1}}}{3 + \sqrt{7}}$$
of $L = F(\sqrt{\nu_{1,1}})$.  One has
$$\mathrm{Trace}_{L/F}(\alpha) = \frac{2}{3 + \sqrt{7}} \in O_F \quad \mathrm{and}\quad  \mathrm{Norm}_{L/F}(\alpha) = \frac{1 - \nu_{1,1}}{(3 + \sqrt{7})^2}  = \beta \in O_F.$$
Thus $\alpha$ lies in $O_L$.  The discriminant of the minimal polynomial of $\alpha$ over $F$ is 
$$(\alpha - \alpha')^2 = 4 \frac{\nu_{1,1}}{(3 + \sqrt{7})^2}$$
when $\alpha' = (1  - \sqrt{\nu_{1,1}})/(3 + \sqrt{7})$ is the conjugate of $\alpha$ over $F$. Since
$4 O_F = \mathfrak{P}^4$, $(3 + \sqrt{7})O_F = \mathfrak{P}$ and $\nu_{1,1} O_F$ is a first degree prime over $3$, this shows that the relative discriminant ideal $d_{L/F}$ divides $3 \mathfrak{P}^2  = 6O_F$. Thus
$L$ lies in the narrow ray class field of $F$ associated to the ideal $6O_F$, and the same is true for
all of the fields $F(\sqrt{\nu_{i,j}})$ associated to $i,j \in \{1, 2\}$.  

The extension $F(\sqrt{-1})/F$ is unramified outside of infinity and  the unique place $\mathfrak{P}$ over $2$.  The completion of $F$ at $\mathfrak{P}$ contains $\sqrt{-1}$ since $-1 \equiv 7$ mod 8. So $F(\sqrt{-1})/F$
is unramified over all finite places of $F$.

Let $\gamma = \frac{3 + \sqrt{13}}{2}$.  
The extension  $F( \sqrt{\gamma})$ is unramified over all finite places of $F$ apart from possibly $\mathfrak{P}$. One has
$$1 - \gamma^3 = 1 - \frac{27 + 3 \cdot 3^2 \sqrt{13} + 3 \cdot 3 \cdot 13 + 13 \cdot \sqrt{13}}{8} = 2 \cdot \frac{-17 - 5 \sqrt{13}}{2}. $$
Thus $\gamma^3 \equiv 1$ mod $2O_F = \mathfrak{P}^2$.  Now 
$$F(\sqrt{\gamma}) = F(\sqrt{\gamma^3}) = F(\tau)$$
where $\tau = (1 + \sqrt{\gamma^3})/(3 + \sqrt{7})$ lies in $O_F$.  We find that the relative discriminant of 
$F(\sqrt{\gamma})$ over $F$ divides the $O_F$ ideal generated by the discriminant
$$(\tau - \tau')^2 = 4 \frac{\gamma^3}{(3 + \sqrt{7})^2}$$
of $\tau$ over $F$, where $\tau'$ is the conjugate of $\tau$ over $F$.  Here $\gamma$ is a unit, so
the relative discriminant of $F(\sqrt{\gamma})$ over $F$ divides $\mathfrak{P}^2 = 2 O_F$. 

We conclude from these computations that the field $N'$ lies inside the narrow ray class field of $F$ having
conductor $6O_F$ times the infinite places of $F$.  

The fundamental units of the three quadratic subfields of $F$ are 
$\epsilon_7 = 8 - 3 \sqrt{7}$, $\epsilon_{13} = (3 + \sqrt{13})/2$ and  $\epsilon_{91} = 1574 + 165 \sqrt{7} \cdot \sqrt{13}$.  The prime $13$ is inert in $\mathbb{Q}(\sqrt{7})$, and we find by genus theory that $h_F$ is odd.  Here $h_{\mathbb{Q}(\sqrt{7})} = h_{\mathbb{Q}(\sqrt{13})} = 1$ and $h_{\mathbb{Q}(\sqrt{91})} = 2$.
We now find using (\ref{eq:biquadclass}) that $h_F = 1$ and 
$[O_F^*:O_{\mathbb{Q}(\sqrt{7})}^* O_{\mathbb{Q}(\sqrt{13})}^* O_{\mathbb{Q}(\sqrt{91})}^*] = 2$.  One checks that a square root of $\epsilon_{91} \epsilon_{7}$ is given by 
$$\nu = \frac{105 - 33 \sqrt{13} - 45 \sqrt{7} + 11 \sqrt{91}}{2}.$$
Thus $\{\epsilon_7, \epsilon_{13}, \nu\}$ is a set of fundamental units for $F$ and $O_F^*$ is generated by these units together with $-1$.

Let $\mathrm{Cl}_{\mathcal{M}\infty}(F)$ be the narrow ray class group of conductor the product of an integral ideal $\mathcal{M}$ of $O_F$ times the infinite places of $F$.  By calculating the signs of the fundamental units at infinity and their residue classes in $(O_F/\mathcal{M})^*$, one now checks that $\mathrm{Cl}_{3 O_F \infty}(F)$ is isomorphic to $(\mathbb{Z}/2)^4$, while the Sylow two-subgroup of $\mathrm{Cl}_{6O_F \infty}(F)$ is isomorphic to $(\mathbb{Z}/2)^6$.  
Since $N'$ has degree $2^6$ over $F$, it follows that $N'$ is the narrow ray class field of $F$ of conductor $6O_F \infty$, and $N'$ is of degree $4$ over the narrow ray class field $N''$ of $F$ of conductor $3O_F \infty$.  

For each rational prime $p$, let $v(p)$ be a place of $N'$ over $p$.  Define $d_{N'_{v(p)}}$ to be the ideal of $\mathbb{Z}_p$ that is the discriminant of $N'_{v(p)}$ over $\mathbb{Q}_p$.  Let
\begin{equation}
\label{eq:apdef}
a_p = \frac{\mathrm{ord}_p(d_{N'_{v(p)}})}{[N'_{v(p)}:\mathbb{Q}_p]}.
\end{equation}
Since $N'$ is Galois over $\mathbb{Q}$, decomposing the different of $N'$ into a product of local differents shows that 
\begin{equation}
\label{eq:product}
D_{N'}^{1/[N':\mathbb{Q}]} = \prod_p p^{a_p}
\end{equation}
where the product is over all rational primes $p$.

When $p \in \{3, 7, 13\}$, $N'_{v(p)}$ is a Galois extension of $\mathbb{Q}_p$ that is tamely ramified of ramification degree $2$. It follows that 
$$a_p = 1/2 \quad \mathrm{for}\quad p \in \{3, 7, 13\}.$$
The completion $F_{\mathfrak{P}}$ of $F$ at the unique place $\mathfrak{P}$ over $2$
is  the  biquadratic extension $\mathbb{Q}_2(\sqrt{7},\sqrt{13})$ of $\mathbb{Q}_2$, which has ramification and inertia degree equal to $2$.  Let $v''(2)$ be the place of the field $N''$ under $v(2)$.  Then $N''_{v''(2)}/F_{\mathfrak{P}}$ is unramified because $N''$ is the narrow ray classfield of $F$ of conductor $3O_F \infty$.  The fact that $\mathrm{Gal}(N'/N'')$ is a Klein four group shows that $N'_{v(2)}$ is a Klein four extension of
$N''_{v''(2)}$ that has conductor $\mathfrak{P}^2 O_{v''(2)}$ when $O_{v''(2)}$ is the valuation ring of $N''_{v''(2)}$.  The conductor of every non-trivial character of $\mathrm{Gal}(N'_{v(2)}/N''_{v''(2)})$ is $\mathfrak{P}^2 O_{v''(2)}= 2 O_{v''(2)}$.  So we find from the conductor discriminant formula that the relative discriminant 
$d_{N'_{v(2)}/N''_{v''(2)}}$ is 
$2^3 O_{v''(2)}$.  We have $d_{N''_{v''(2)}/F_{\mathfrak{P}}} = O_{\mathfrak{P}}$ since $N''_{v''(2)}/F_{\mathfrak{P}}$ is unramified. Thus
$$d_{N'_{v(2)}/F_{\mathfrak{P}}} = d_{N''_{v''(2)}/F_{\mathfrak{P}}}^{[N'_{v(2)}:N''_{v''(2)}]} \cdot 
\mathrm{Norm}_{N''_{v'(2)}/F_{\mathfrak{P}}} ( d_{N'_{v(2)}/N''_{v''(2)}}) = 2^{3f} O_{\mathfrak{P}}
$$
when $f = [N''_{v''(2)}:F_{\mathfrak{P}}]$.  
Finally, 
$d_{F_\mathfrak{P}} = 4^2 \mathbb{Z}_2$
since $d_{\mathbb{Q}_2(\sqrt{7})}= 4 \mathbb{Z}_2$ and $F_{\mathfrak{P}}/\mathbb{Q}_2(\sqrt{7})$ is unramified.   Thus  
$$d_{N'_{v(2)}/\mathbb{Q}_p} = d_{F_{\mathfrak{P}}}^{[N'_{v(2)}:F_{\mathfrak{P}}]} 
\cdot \mathrm{Norm}_{F_{\mathfrak{P}}/\mathbb{Q}_2}  ( d_{N'_{v(2)}/F_{\mathfrak{P}}} )= d_{F_{\mathfrak{P}}}^{4f} \cdot \mathrm{Norm}_{F_{\mathfrak{P}}/\mathbb{Q}_2} (2^{3f} O_{\mathfrak{P}})  = 2^{16f} \cdot 2^{12f} \mathbb{Z}_2 = 2^{28f} \mathbb{Z}_2$$
so 
$$a_2 = \frac{28f}{16f} = \frac{7}{4}.$$
Since $N'/\mathbb{Q}$ is unramified outside $\{2, 3, 7, 13\}$ and infinity, we find from (\ref{eq:product}) that 
$$
D_{N'}^{1/[N':\mathbb{Q}]} = 2^{7/4} \cdot (3 \cdot 7 \cdot 13)^{1/2} \approx 55.5756...
$$
as in (\ref{eq:Nprimediscriminant}).

\end{document}